\DeclareMathAlphabet{\pazocal}{OMS}{zplm}{m}{n}
\newcommand{\R}{\mathbb{R}}
\renewcommand{\H}{\mathbb{H}}
\newcommand{\F}{\pazocal{F}}
\newcommand{\G}{\pazocal{G}}
\newcommand{\U}{\pazocal{U}}
\newcommand{\M}{\pazocal{M}}
\newcommand{\Lpazo}{\pazocal{L}}
\newcommand{\Ppazo}{\pazocal{P}}
\newcommand{\Cpazo}{\pazocal{C}}
\newcommand{\Lcal}{\mathcal{L}}
\newcommand{\Vcal}{\mathcal{V}}
\newcommand{\Pcal}{\mathcal{P}}
\newcommand{\Ccal}{\mathcal{C}}
\newcommand{\Scal}{\mathcal{S}}
\newcommand{\Id}{\textnormal{Id}}
\newcommand{\D}{\textnormal{D}}
\newcommand{\supp}{\textnormal{supp}}
\newcommand{\Div}{\textnormal{div}}
\newcommand{\textbn}[1]{\textnormal{\textbf{#1}}}
\newcommand{\vb}{\boldsymbol{v}}
\newcommand{\Bmu}{\boldsymbol{\mu}}
\newcommand{\INTDom}[3]{\int_{#2} #1 \textnormal{d} #3}
\newcommand{\INTSeg}[4]{\int_{#3}^{#4} #1 \textnormal{d} #2}
\newcommand{\Norm}[1]{\parallel \hspace{-0.1cm} #1 \hspace{-0.1cm} \parallel}
\newcommand{\tderv}[2]{\tfrac{\textnormal{d} #1}{ \textnormal{d} #2}}
\newtheorem{Def}{Definition}
\newtheorem{thm}{Theorem}
\newtheorem{prop}{Proposition}
\newtheorem{cor}{Corollary}
\newenvironment{taggedhyp}[1]
    {\taggedhypx}
    {\endtaggedhypx}
\title{On the Properties of the Value Function Associated to a Mean-Field Optimal Control Problem of Bolza Type\thanks{\textit{This material is based upon work supported by the Air Force Office of Scientific Research under award number FA9550-18-1-0254.}}}
\author{Beno\^it Bonnet\thanks{CNRS, IMJ-PRG, UMR 7586, Sorbonne Universit\'e, 4 place Jussieu,  75252  Paris,  France {\tt\small benoit.bonnet@imj-prg.fr}} \hspace{0.05cm} and H\'el\`ene Frankowska\thanks{CNRS, IMJ-PRG, UMR 7586, Sorbonne Université, 4 place Jussieu,  75252  Paris,  France. {\tt\small helene.frankowska@imj-prg.fr}}%
}
\begin{document}

\maketitle


\begin{abstract}
In this paper, we obtain several structural results for the value function associated to a mean-field optimal control problem of Bolza type in the space of measures. After establishing the sensitivity relations bridging between the costates of the maximum principle and metric superdifferentials of the value function, we investigate semiconcavity properties of this latter with respect to both variables. We then characterise optimal trajectories using set-valued feedback mappings defined in terms of suitable directional derivatives of the value function. 
\end{abstract}


\section{Introduction}

During the past decade, variational problems formulated on mean-field approximations of collective dynamics have gained a tremendous amount of steam. Indeed, the mathematical field of \textit{multi-agent systems} analysis has been for several years the stage on which modern theories such as mean-field control \cite{Bensoussan2013,Carmona2018} and mean-field games \cite{Huang2006,Lasry2007} are being developed.

The main rationale behind studying the former of these two fields of research can be heuristically summarised as follows. From a practical standpoint, it is usually easier to model collective dynamics by means of large families of \textit{microscopic} entities -- called the agents --, which evolution is prescribed e.g. by ODEs \cite{Choi2014} or time-dependent graphs \cite{Egerstedt2010}. However, such formulations often lead to numerically intractable problems, and rarely allow to capture global properties of the system in a meaningful way. For these reasons, an ever-expanding literature has been focusing on the development of mathematical tools for the investigation of adequate \textit{macroscopic approximations} of such systems, see e.g. \cite{LipReg,Cardaliaguet2010,CDLL,ControlKCS} and references therein. In this context, an intensive research effort has been directed towards the establishment of a theory of \textit{mean-field optimal control} \cite{PMPWassConst,SetValuedPMP,PMPWass,Cavagnari2020,Cavagnari2021,Fornasier2019}. This denomination usually refers to optimal control problems formulated in the so-called \textit{Wasserstein spaces} (see Definition \ref{def:Wass} below) of optimal transport \cite{AGS}, which constitute a convenient and natural framework for the generalisation of the classical geometric results of control theory to this setting.

\medskip

In this paper, we transpose to this class of problems a series of result that were originally derived in \cite{Cannarsa1991}, dealing with fine properties of the value function associated to a general family of smooth optimal control problems. With this aim, we first investigate the so-called \textit{sensitivity relations} -- originally established in \cite{Flemming1975} -- which provide a link between the costate variables stemming from the Pontryagin maximum principle \cite{Pontryagin} (``PMP'' for short) and the superdifferentials of the value function. Among their many possible applications, the sensitivity relations are very useful for expressing sufficient conditions to ensure the optimality of Pontryagin's extremals. We proceed by providing a list of conditions under which the value function is \textit{semiconcave} \cite{CannarsaS2004} along suitably chosen interpolating curves. This notion of regularity is indeed very rich, and appears in a wide number of problems originating both in control theory and in the calculus of variations, see for instance \cite{Capuzzo1984,Rifford2002}. Moreover in the context of Wasserstein spaces, it has been well established in the monograph \cite{AGS} that semiconcavity properties along interpolating curves are in fact the natural path leading to meaningful results on metric superdifferentials in the space of measures. By leveraging some of these results, it is then possible to characterise optimal feedbacks for mean-field optimal control problems, and to provide useful regularity properties for the latter.

The structure of the article is the following. In Section \ref{section:Preli}, we recollect notions of measure theory and optimal transport, and proceed by recalling several results which are more specific to mean-field optimal control problems in Section \ref{section:MFOCP}. In Section \ref{section:Sensibility}, we study the sensitivity relations and semiconcavity properties of the value function for a general Bolza problem in Wasserstein spaces. We then make use of these results to investigate optimal feedbacks and their regularity in Section \ref{section:Reg}.


\section{Preliminaries}
\label{section:Preli}

In this section, we recall preliminary notions of measure theory and optimal transport, for which we refer to \cite{AmbrosioFuscoPallara} and \cite{AGS} respectively.


\subsection{Optimal transport and calculus in Wasserstein spaces}

In the sequel, $\Pcal(\R^d)$ denotes the space of Borel probability measures over $\R^d$, endowed with the standard \textit{narrow topology} (see \cite[Chapter 5]{AGS}). Given $p \in [1,+\infty)$, let $\Pcal_p(\R^d)$ be the subset of probability measure which $p$-momentum $\M_p(\mu) := \big(\INTDom{|x|^p}{\R^d}{\mu(x)}\big)^{1/p}$ is finite, and $\Pcal_c(\R^d)$ be that of measures with compact support. For $\mu \in \Pcal(\R^d)$, we denote by $L^p(\R^d,\R^d;\mu)$ the Banach space of maps from $\R^d$ into itself that are $p$-summable with respect to $\mu$. When $p = 2$, the latter is a Hilbert space endowed with the scalar product
\begin{equation*}
\langle \zeta,\xi \rangle_{L^2(\mu)} := \INTDom{\langle \zeta(x),\xi(x)\rangle}{\R^d}{\mu(x)}
\end{equation*}
defined here for any $\zeta,\xi \in L^2(\R^d,\R^d;\mu)$. We will also denote by $\Lcal^1$ the standard Lebesgue measure over $\R$, and consider the associated spaces $L^p(I,\R_+)$ of $p$-summable maps from an interval $I \subset \R$ into $\R_+$.

Given a Borel map $f : \R^d \rightarrow \R^d$, we define the \textit{pushforward} $f_{\#} \mu \in \Pcal(\R^d)$ of $\mu$ through $f$ as the measure
$
f_{\#} \mu(B) := \mu(f^{-1}(B)),
$
for every Borel set $B \subset \R^d$.

\begin{Def}[Transport plans]
The set of \textit{transport plans} between two measures $\mu,\nu \in \Pcal(\R^d)$ -- denoted by $\Gamma(\mu,\nu)$ -- is the collection of elements $\gamma \in \Pcal(\R^d \times \R^d)$ such that $\pi^1_{\#} \gamma = \mu$ and $\pi^2_{\#} \gamma = \nu$, where $\pi^1,\pi^2 : \R^d \times \R^d \rightarrow \R^d$ represent the projection operator onto the first and second factor respectively.
\end{Def}

In the sequel, we will also need the following simplified version of \cite[Theorem 5.3.1]{AGS}.

\begin{thm}[Barycentric projection]
\label{thm:Disintegration}
Let $p \in [1,+\infty)$, $\mu,\nu \in \Pcal_p(\R^d)$ and $\gamma \in \Gamma(\mu,\nu)$. Then, there exists a unique map $\bar{\gamma} \in L^p(\R^d,\R^d;\mu)$ -- called the \textit{barycentric projection} of $\gamma$ onto $\pi^1_{\#} \gamma = \mu$ --, such that
\begin{equation*}
\INTDom{\langle y , \F(x) \rangle}{\R^{2d}}{\gamma(x,y)} = \langle \bar{\gamma} , \F \rangle_{L^2(\mu)},
\end{equation*}
for every $\F \in L^{\infty}(\R^d,\R^d;\mu)$.
\end{thm}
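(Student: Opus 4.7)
\medskip

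\noindent\textbf{Proof plan.} The strategy is the standard one: use disintegration of $\gamma$ over its first marginal $\mu$ to define $\bar{\gamma}$ as a ``conditional barycenter'', then check the integrability, the tested identity, and uniqueness.

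\smallskip

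First, I would invoke the disintegration theorem (see e.g.\ \cite[Theorem 5.3.1]{AGS}) applied to $\gamma \in \Pcal(\R^d \times \R^d)$ with respect to the projection $\pi^1$. Since $\pi^1_{\#} \gamma = \mu$, this yields a $\mu$-a.e.\ uniquely determined Borel family $\{\gamma_x\}_{x \in \R^d} \subset \Pcal(\R^d)$ such that
\begin{equation*}
\INTDom{\varphi(x,y)}{\R^{2d}}{\gamma(x,y)} = \INTDom{\left( \INTDom{\varphi(x,y)}{\R^d}{\gamma_x(y)} \right)}{\R^d}{\mu(x)}
\end{equation*}
for every Borel $\varphi : \R^{2d} \to [0,+\infty]$. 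I would then simply set
\begin{equation*}
\bar{\gamma}(x) \, := \, \INTDom{y}{\R^d}{\gamma_x(y)},
\end{equation*}
which is well-defined for $\mu$-a.e.\ $x$ because $\nu = \pi^2_{\#} \gamma \in \Pcal_p(\R^d) \subset \Pcal_1(\R^d)$ and Fubini gives that $y \mapsto y$ is $\gamma_x$-integrable for $\mu$-a.e.\ $x$.

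\smallskip

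Second, I would verify that $\bar{\gamma} \in L^p(\R^d,\R^d;\mu)$. Applying Jensen's inequality to the convex map $y \mapsto |y|^p$ componentwise and then disintegrating gives
\begin{equation*}
\INTDom{|\bar{\gamma}(x)|^p}{\R^d}{\mu(x)} \, \leq \, \INTDom{\left( \INTDom{|y|^p}{\R^d}{\gamma_x(y)} \right)}{\R^d}{\mu(x)} \, = \, \INTDom{|y|^p}{\R^{2d}}{\gamma(x,y)} \, = \, \M_p(\nu)^p \, < \, +\infty.
\end{equation*}
The required identity is then a direct consequence of disintegration: for any $\F \in L^{\infty}(\R^d,\R^d;\mu)$,
\begin{equation*}
\INTDom{\langle y, \F(x) \rangle}{\R^{2d}}{\gamma(x,y)} \, = \, \INTDom{\left\langle \INTDom{y}{\R^d}{\gamma_x(y)} \hspace{0.05cm}, \, \F(x) \right\rangle}{\R^d}{\mu(x)} \, = \, \langle \bar{\gamma}, \F \rangle_{L^2(\mu)},
\end{equation*}
where moving $\F(x)$ outside the inner integral is legitimate since it does not depend on $y$, and Fubini applies because the integrand is absolutely integrable with respect to $\gamma$ (as $\|\F\|_{L^\infty} \M_1(\nu) < +\infty$).

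\smallskip

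Third, for uniqueness, if $\bar{\gamma}_1, \bar{\gamma}_2 \in L^p(\R^d,\R^d;\mu)$ both satisfy the identity, then $\INTDom{\langle \bar{\gamma}_1 - \bar{\gamma}_2, \F \rangle}{\R^d}{\mu} = 0$ for every $\F \in L^\infty(\R^d,\R^d;\mu)$. Testing against $\F(x) = \mathbf{1}_{B}(x) \, e_i$ for Borel $B \subset \R^d$ of finite $\mu$-measure and canonical basis vectors $e_i$ recovers the equality $\bar{\gamma}_1 = \bar{\gamma}_2$ $\mu$-a.e. The only potentially delicate step in the whole argument is the existence of the disintegration, which however is a well-known consequence of $\R^d$ being a Polish space, and so one need only quote it.
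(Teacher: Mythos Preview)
Your proof is correct and follows the standard disintegration route. The paper does not supply its own proof of this statement; it merely records it as a simplified version of \cite[Theorem~5.3.1]{AGS}, so your argument is precisely the natural unpacking of that citation. One cosmetic remark: there is no need to apply Jensen ``componentwise''; the map $y \mapsto |y|^p$ is convex on $\R^d$, so Jensen applies directly to the vector-valued barycenter and yields $|\bar{\gamma}(x)|^p \le \int_{\R^d} |y|^p \,\mathrm{d}\gamma_x(y)$ in one step.
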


\begin{Def}[Wasserstein distances]
\label{def:Wass}
The quantity
\begin{equation*}
W_p(\mu,\nu) := \min_{\gamma \in \Gamma(\mu,\nu)} \Big( \INTDom{|x-y|^p}{\R^{2d}}{\gamma(x,y)} \Big)^{1/p},
\end{equation*}
defines a distance between any two measures $\mu,\nu \in \Pcal_p(\R^d)$, and we denote by $\Gamma_o(\mu,\nu)$ the set of transport plans for which the minimum is attained.
\end{Def}

The Wasserstein space of order $p \in [1,+\infty)$ is defined as the metric space $(\Pcal_p(\R^d),W_p)$. In the sequel, we will often consider the (non complete) metric space $(\Pcal_c(\R^d),W_p)$ of compactly supported measures equipped with the $W_p$-metric. When $p =2$, it has been known since \cite{AGS,Otto2001} that $(\Pcal_2(\R^d),W_2)$ could be endowed -- at least formally -- with a differentiable manifold-like structure. We present below a revisited definition of the corresponding calculus, tailored for $(\Pcal_c(\R^d),W_2)$, which is borrowed from \cite{SetValuedPMP}.  Given $R>0$, we will use the notation $B_R(\mu) := \cup_{x \in \supp(\mu)} B(x,R)$.

\vspace{0.05cm}

\begin{Def}[Local differentials]
\label{def:DiffWass}
A functional $\phi : \Pcal_2(\R^d) \mapsto \R \cup \{ \pm \infty \}$ is \textit{locally differentiable} at $\mu \in \Pcal_c(\R^d)$ if and only if there exists $\nabla \phi(\mu) \in L^2(\R^d,\R^d;\mu)$ such that for all $R>0$ and $\nu \in \Pcal(B_R(\mu))$, it holds
\begin{equation*}
\phi(\nu) = \phi(\mu) + \INTDom{\langle \nabla \phi(\mu),y-x\rangle}{\R^{2d}}{\gamma(x,y)} + o_R(W_2(\mu,\nu)),
\end{equation*}
for every $\gamma \in \Gamma_o(\mu,\nu)$. We also say that $\phi(\cdot)$ is \textit{locally continuously differentiable} if the application $(\mu,x) \in \Pcal_c(\R^d) \times \R^d \mapsto \nabla \phi(\mu)(x) \in \R^d$ is continuous.
\end{Def}

In \cite{SetValuedPMP}, we proved the following chain rule property.

\begin{prop}[Chain rule]
\label{prop:Chainrule}
Consider elements $\mu \in \Pcal_c(\R^d)$, $\F \in L^{\infty}(\R^d,\R^d;\mu)$ and let $\phi : \Pcal_2(\R^d) \rightarrow \R \cup \{ \pm \infty\}$ be locally differentiable at $\mu$. Then, it holds
\begin{equation*}
\phi((\Id+\epsilon \F)_{\#} \mu) = \phi(\mu) + \epsilon \langle \nabla \phi(\mu) , \F \rangle_{L^2(\mu)} + o(\epsilon).
\end{equation*}
\end{prop}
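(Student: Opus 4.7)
The plan is to reduce the chain rule to a direct application of the local differentiability at $\mu$ along the transport induced by $\Id + \epsilon\F$, and then to match the resulting first-order expression against $\langle\nabla\phi(\mu),\F\rangle_{L^2(\mu)}$ via a barycentric projection argument.

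First, I would set $\nu_\epsilon := (\Id + \epsilon \F)_\# \mu$. Since $\F \in L^\infty(\R^d,\R^d;\mu)$, one has $\supp(\nu_\epsilon) \subset \supp(\mu) + \epsilon\,\overline{B(0,\|\F\|_{L^\infty(\mu)})}$, hence $\nu_\epsilon \in \Pcal(B_R(\mu))$ for any fixed $R > \|\F\|_{L^\infty(\mu)}$ and all sufficiently small $\epsilon$. The natural (but generally non-optimal) plan $\gamma_\epsilon := (\Id, \Id + \epsilon \F)_\# \mu \in \Gamma(\mu,\nu_\epsilon)$ has transport cost $\epsilon^2 \|\F\|_{L^2(\mu)}^2$, giving $W_2(\mu,\nu_\epsilon) \leq \epsilon \|\F\|_{L^2(\mu)} = O(\epsilon)$.

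Next, I would pick an optimal plan $\gamma_\epsilon^* \in \Gamma_o(\mu,\nu_\epsilon)$, apply Definition \ref{def:DiffWass}, and recast the resulting integral through the barycentric projection $\bar{\gamma}_\epsilon^*$ from Theorem \ref{thm:Disintegration}:
$$\phi(\nu_\epsilon) - \phi(\mu) = \int_{\R^{2d}} \langle \nabla\phi(\mu)(x), y - x\rangle \, d\gamma_\epsilon^*(x,y) + o_R(W_2(\mu,\nu_\epsilon)) = \langle \nabla\phi(\mu), \bar{\gamma}_\epsilon^* - \Id \rangle_{L^2(\mu)} + o(\epsilon),$$
where the preceding distance bound absorbs the remainder into $o(\epsilon)$.

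The heart of the argument is then the asymptotic matching
$$\langle \nabla\phi(\mu), \bar{\gamma}_\epsilon^* - \Id\rangle_{L^2(\mu)} = \epsilon\,\langle\nabla\phi(\mu), \F\rangle_{L^2(\mu)} + o(\epsilon),$$
which I expect to be the main obstacle. A plain Cauchy--Schwarz only delivers $O(\epsilon)$ through $\|\bar{\gamma}_\epsilon^* - \Id\|_{L^2(\mu)} \leq W_2(\mu,\nu_\epsilon) \leq \epsilon\|\F\|_{L^2(\mu)}$, which is too coarse, and one cannot simply substitute $\gamma_\epsilon$ for $\gamma_\epsilon^*$ because the former need not be optimal. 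The sharper handle comes from the Wasserstein tangent-cone structure at $\mu$: a classical fact in Wasserstein calculus ensures that the rescaled barycentric projections $(\bar{\gamma}_\epsilon^* - \Id)/\epsilon$ converge in $L^2(\R^d,\R^d;\mu)$ to the orthogonal projection of $\F$ onto the tangent cone $\Tan_\mu \Pcal_2(\R^d) := \overline{\{\nabla\psi : \psi \in C^\infty_c(\R^d)\}}^{L^2(\mu)}$. As the gradient supplied by Definition \ref{def:DiffWass} is determined only modulo the orthogonal complement of $\Tan_\mu \Pcal_2(\R^d)$, I would take $\nabla\phi(\mu) \in \Tan_\mu \Pcal_2(\R^d)$, in which case the projection is invisible in the pairing and the identification collapses to $\langle\nabla\phi(\mu),\F\rangle_{L^2(\mu)}$. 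Combining these two estimates then yields the announced first-order expansion.
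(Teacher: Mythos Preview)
The paper does not prove this proposition in-text; the sentence introducing it reads ``In \cite{SetValuedPMP}, we proved the following chain rule property,'' so there is no argument here to compare against. I therefore assess your outline on its own merits.

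Your setup is sound through the expansion along an optimal plan $\gamma_\epsilon^* \in \Gamma_o(\mu,\nu_\epsilon)$, and you correctly locate the only nontrivial step. The gap is the ``classical fact'' you invoke: strong $L^2(\mu)$-convergence of $(\bar\gamma_\epsilon^* - \Id)/\epsilon$ to the tangential projection $\Pi_{\Tan_\mu\Pcal_2(\R^d)}\F$ is \emph{not} a standard, citable statement for an arbitrary $\F \in L^\infty(\R^d,\R^d;\mu)$. What the AGS machinery (Chapter~8) gives you is that rescaled optimal plans between $\mu$ and $\nu_\epsilon$ have weak accumulation points concentrated on the tangent bundle, and that the metric derivative of $\epsilon\mapsto\nu_\epsilon$ at $0$ equals $\|\Pi_{\Tan_\mu}\F\|_{L^2(\mu)}$. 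Turning those facts into the strong convergence of barycentric projections that your pairing argument needs is exactly the work you are skipping, and for a non-gradient $\F$ it does not follow from a one-line citation. Your final move---choosing the representative $\nabla\phi(\mu)\in\Tan_\mu\Pcal_2(\R^d)$ so the projection disappears---then rests entirely on this unjustified convergence, so the argument is incomplete at its decisive point.

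There is also a smaller issue you should address explicitly: the proposition is stated for \emph{the} gradient supplied by Definition~\ref{def:DiffWass}, so before replacing it by its tangential representative you must argue either that the definition already forces $\nabla\phi(\mu)\in\Tan_\mu\Pcal_2(\R^d)$, or that any two elements satisfying the definition give the same value of $\langle\nabla\phi(\mu),\F\rangle_{L^2(\mu)}$. Otherwise the right-hand side of the chain rule is not well defined and your freedom to pick a convenient representative is illusory.
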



\subsection{Non-local continuity equations}

Consider the \textit{non-local continuity equation} in a Wasserstein space
\begin{equation}
\label{eq:NonLocalCE}
\partial_t \mu(t) + \Div_x(v(t,\mu(t))\mu(t)) = 0, \quad  \mu(0) = \mu^0,
\end{equation}
where the \textit{non-local velocity-field} $v : [0,T] \times \Pcal_c(\R^d) \times \R^d \rightarrow \R^d$ satisfies the following assumptions.

\begin{taggedhyp}{\textbn{(CE)}}
\label{hyp:CE}
Suppose that for any compact set $K \subset \R^d$, the following holds.
\begin{enumerate}
\item[$(i)$] The mapping $t \mapsto v(t,\mu,x) \in \R^d$ is $\Lcal^1$-measurable with respect to $t \in [0,T]$ for all $(\mu,x) \in \Pcal_c(\R^d) \times \R^d$, and there exists $m(\cdot) \in L^1([0,T],\R_+)$ such that
\begin{equation*}
|v(t,\mu,x)| \leq m(t) \Big( 1+|x| + \M_1(\mu) \Big).
\end{equation*}
\item[$(ii)$] There exists a map $\ell_K(\cdot) \in L^1([0,T],\R_+)$ such that
\begin{equation*}
|v(t,\nu,y) - v(t,\mu,x)| \leq \ell_K(t) \big( W_1(\mu,\nu) + |x-y| \big)
\end{equation*}
for $\Lcal^1$-almost every $t \in [0,T]$, any $\mu,\nu \in \Pcal(K)$ and all $x,y \in K$.
\end{enumerate}
\end{taggedhyp}

Under these Cauchy-Lipschitz assumptions, we have the following well-posedness result for \eqref{eq:NonLocalCE} (see e.g. \cite{ContInc}).

\begin{thm}[Well-posedness of \eqref{eq:NonLocalCE}]
Let $r >0$, $\mu^0 \in \Pcal(B(0,r))$ and suppose that hypotheses \ref{hyp:CE} hold. Then, there exists a unique distributional solution $\mu(\cdot)$ of \eqref{eq:NonLocalCE}, which also satisfies
\begin{equation*}
\supp(\mu(t)) \subset B(0,R_r), \quad W_1(\mu(t),\mu(\tau)) \leq \INTSeg{m_r(s)}{s}{\tau}{t},
\end{equation*}
for all $0 \leq \tau \leq t \leq T$, with $R_r >0$ and $m_r(\cdot) \in L^1([0,T],\R_+)$ depending on $r > 0$ and $m(\cdot)$. Moreover, the curve $\mu(\cdot)$ can be represented explicitly as
\begin{equation*}
\mu(t) = \Phi_{(\tau,t)}[\mu(\tau)](\cdot)_{\#} \mu(\tau),
\end{equation*}
for every $\tau,t \in [0,T]$, where $(\Phi_{(\tau,t)}[\mu](\cdot))_{\tau,t \in [0,T]}$ denotes the \textit{non-local flow} starting from $\mu \in \Pcal_c(\R^d)$ at time $\tau \in [0,T]$, defined as the unique solution of
\begin{equation}
\label{eq:Semigroup}
\left\{
\begin{aligned}
\partial_t \Phi_{(\tau,t)}[\mu](x) & = v \Big( t ,\Phi_{(\tau,t)}[\mu](\cdot)_{\#} \mu , \Phi_{(\tau,t)}[\mu](x)\Big), \\
\Phi_{(\tau,\tau)}[\mu](x) & = x,
\end{aligned}
\right.
\end{equation}
for all $x \in \R^d$.
\end{thm}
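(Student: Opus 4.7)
\medskip

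The plan is to prove both existence and uniqueness simultaneously via a Banach fixed-point argument on the space of curves in the Wasserstein space, together with the representation formula, and to derive the support and continuity bounds as a posteriori estimates. The result is essentially classical and appears in \cite{ContInc}, so I will only sketch the architecture.

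\textbf{Step 1 (A priori support bound).} I would first show that any solution $\mu(\cdot)$ must have supports contained in a fixed ball $B(0,R_r)$ depending only on $r$ and $m(\cdot)$. Indeed, by Gr\"onwall applied to the characteristic ODE $\dot{x}(t) = v(t,\mu(t),x(t))$ starting from $|x(0)| \leq r$, the sublinear bound in \ref{hyp:CE}$(i)$ yields $|x(t)| \leq R_r$ for some explicit $R_r$, and one checks that $\M_1(\mu(t))$ stays uniformly bounded. This step is where one must verify that $\mathcal{M}_1(\mu(t))$ can itself be controlled by Gr\"onwall in a self-consistent way (the coupling through $\M_1(\mu)$ in \ref{hyp:CE}$(i)$ forces one to estimate the first moment and the support together).

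\textbf{Step 2 (Fixed-point setup).} Let $X$ be the complete metric space of curves $\tilde \mu \in C([0,T],(\Pcal(B(0,R_r)),W_1))$ with $\tilde \mu(0) = \mu^0$, equipped with the uniform distance $d(\tilde\mu_1,\tilde\mu_2) := \sup_{t \in [0,T]} e^{-\lambda L(t)} W_1(\tilde\mu_1(t),\tilde\mu_2(t))$, where $L(t) := \INTSeg{\ell_{B(0,R_r)}(s)}{s}{0}{t}$ and $\lambda > 0$ will be tuned. Given $\tilde\mu \in X$, the non-autonomous ODE $\dot x(t) = v(t,\tilde\mu(t),x(t))$ has a unique Carath\'eodory flow $\Phi^{\tilde\mu}_{(0,t)}$ on $\R^d$ by \ref{hyp:CE}, and I define the operator $\Tpazo[\tilde\mu](t) := \Phi^{\tilde\mu}_{(0,t)}(\cdot)_{\#} \mu^0$. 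One checks that $\Tpazo$ maps $X$ into itself using Step 1.

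\textbf{Step 3 (Contraction).} Given $\tilde\mu_1,\tilde\mu_2 \in X$, compare the two flows via Gr\"onwall: for any $x \in B(0,r)$,
\begin{equation*}
|\Phi^{\tilde\mu_1}_{(0,t)}(x) - \Phi^{\tilde\mu_2}_{(0,t)}(x)| \leq \INTSeg{\ell_{B(0,R_r)}(s)\big(W_1(\tilde\mu_1(s),\tilde\mu_2(s)) + |\Phi^{\tilde\mu_1}_{(0,s)}(x) - \Phi^{\tilde\mu_2}_{(0,s)}(x)|\big)}{s}{0}{t},
\end{equation*}
which, using the transport plan $(\Phi^{\tilde\mu_1}_{(0,t)},\Phi^{\tilde\mu_2}_{(0,t)})_{\#}\mu^0 \in \Gamma(\Tpazo[\tilde\mu_1](t),\Tpazo[\tilde\mu_2](t))$ and Gr\"onwall, produces an estimate of the form
\begin{equation*}
W_1(\Tpazo[\tilde\mu_1](t),\Tpazo[\tilde\mu_2](t)) \leq \INTSeg{\ell_{B(0,R_r)}(s) e^{L(t)-L(s)} W_1(\tilde\mu_1(s),\tilde\mu_2(s))}{s}{0}{t}.
\end{equation*}
Multiplying by $e^{-\lambda L(t)}$ and choosing $\lambda$ large makes $\Tpazo$ a strict contraction on $(X,d)$. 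The fixed point $\mu(\cdot)$ then satisfies $\mu(t) = \Phi^{\mu}_{(0,t)}(\cdot)_{\#} \mu^0$, which is the required representation for $\tau = 0$; the semigroup identity \eqref{eq:Semigroup} for general $\tau \in [0,T]$ follows by uniqueness of the associated ODE.

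\textbf{Step 4 (Continuity equation and final bounds).} That the fixed point $\mu(\cdot)$ is a distributional solution of \eqref{eq:NonLocalCE} is standard: differentiating $t \mapsto \INTDom{\varphi}{\R^d}{\mu(t)} = \INTDom{\varphi \circ \Phi^{\mu}_{(0,t)}}{\R^d}{\mu^0}$ for $\varphi \in C^\infty_c(\R^d)$ and using the chain rule along characteristics produces the weak form. Conversely, uniqueness among all distributional solutions follows from a superposition/characteristics argument combined with the contraction in Step~3. Finally, the Wasserstein continuity estimate $W_1(\mu(t),\mu(\tau)) \leq \INTSeg{m_r(s)}{s}{\tau}{t}$ is obtained by using $(\mathrm{Id},\Phi^{\mu}_{(\tau,t)})_{\#} \mu(\tau)$ as a transport plan and invoking \ref{hyp:CE}$(i)$ on the bounded support $B(0,R_r)$, which yields an effective growth $m_r(\cdot) \in L^1([0,T],\R_+)$.

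The main obstacle is Step~3: one has to simultaneously contract in the measure argument and keep track of how the flow depends non-locally on $\tilde\mu$; the exponential weighting in $L(t)$ is exactly what converts the a priori linear Gr\"onwall blow-up into a strict contraction on all of $[0,T]$.
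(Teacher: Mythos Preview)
The paper does not actually prove this theorem: it is stated as a preliminary result with the reference ``(see e.g.\ \cite{ContInc})'' and no proof is given in the text. Your sketch via a Banach fixed-point argument on curves in $(\Pcal(B(0,R_r)),W_1)$, with the exponentially weighted metric to absorb the Gr\"onwall factor, is the standard route to this kind of well-posedness statement and is consistent with what one finds in the cited literature; there is nothing to compare against in the paper itself.
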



\section{Mean-field optimal control}
\label{section:MFOCP}

In this section, we focus more in details on the following mean-field optimal control problem of Bolza type
\begin{equation*}
(\Ppazo) ~
\left\{
\begin{aligned}
\min_{u(\cdot) \in \U} & \bigg[ \INTSeg{L(\mu(t),u(t))}{t}{0}{T} + \varphi(\mu(T)) \bigg], \\
\text{s.t.} ~ & \left\{
\begin{aligned}
& \partial_t \mu(t) + \Div_x(v(t,\mu(t),u(t))\mu(t)) = 0, \\
& \mu(0) = \mu^0.
\end{aligned}
\right.
\end{aligned}
\right.
\end{equation*}
Here $T>0$, $v : [0,T] \times \Pcal_c(\R^d) \times U \times \R^d \rightarrow \R^d$ is a controlled velocity-field, the set of admissible controls is $\U := L^2([0,T],U)$ where $U \subset \R^m$ is a closed set, and $\mu^0 \in \Pcal_c(\R^d)$ is a fixed initial datum. In the present paper, we make the following assumptions on $(\Ppazo)$.

\begin{taggedhyp}{\textbn{(OCP)}}
\label{hyp:OCP}
Suppose that for any compact set $K \subset \R^d$, the following holds.
\begin{enumerate}
\item[$(i)$] The map $u \in U \mapsto v(t,\mu,u,x) \in \R^d$ is continuous, and there exist constants $m,\ell_K > 0$ such that
\begin{equation*}
\left\{
\begin{aligned}
& |v(t,\mu,u,x)| \leq m \Big( 1+|x| + \M_1(\mu) \Big)(1+|u|^2), \\
& |v(\tau,\nu,u,y)- v(t,\mu,u,x)| \\
& \hspace{0.45cm} \leq \ell_K \big( |t-\tau| + W_1(\mu,\nu) + |x-y|\big)\big(1+|u|^2),
\end{aligned}
\right.
\end{equation*}
for all $t,\tau \in [0,T]$, any $\mu,\nu \in \Pcal(K)$, every $u \in U$ and each $x,y \in K$.
\item[$(ii)$] The map $u \in U \mapsto L(\mu,u) \in \R_+$ is lower semicontinuous, and there exist constants $l^1,\Cpazo^1,\Lpazo_K^1 > 0$ such that $L(\mu,u) \geq \Cpazo^1 |u|^2$ and
\begin{equation*}
\left\{
\begin{aligned}
& L(\mu,u) \leq l^1(1+\M_1(\mu))(1+|u|^2), \\
& |L(\nu,u) - L(\mu,u)| \leq \Lpazo_K^1 W_1(\mu,\nu)(1+|u|^2),
\end{aligned}
\right.
\end{equation*}
for any $\mu,\nu \in \Pcal(K)$ and every $u \in U$.
\item[$(iii)$] There exist $\Cpazo^2 \in \R$ and $\Lpazo^2_K > 0$ such that
\begin{equation*}
\varphi(\mu) \geq \Cpazo^2 \quad \text{and} \quad |\varphi(\nu) - \varphi(\mu)| \leq \Lpazo_K^2 W_1(\mu,\nu),
\end{equation*}
for any $\mu,\nu \in \Pcal(K)$.
\item[$(iv)$] The maps $(x,\mu) \mapsto v(t,\mu,u,x),L(u,\mu),\varphi(\mu)$ are locally continuously differentiable in the sense of Definition \ref{def:DiffWass}.
\end{enumerate}
\end{taggedhyp}

While the assumptions listed above are not minimal for the developments of this section (see e.g. more general conditions in \cite[Section 4.2]{SetValuedPMP}), they will be necessary to establish several results in Section \ref{section:Sensibility} and Section \ref{section:Reg}.

\begin{thm}[Existence]
\label{thm:Existence}
Suppose that \ref{hyp:OCP} hold and that for any non-empty compact $K \subset \R^d$ and every $(t,\mu) \in [0,T] \times \Pcal_c(\R^d)$, the restricted sets
\begin{equation*}
\begin{aligned}
\Big\{ (v(t,\mu,u)_{\vert K},L(\mu,u)+r ) ~\text{s.t.}~ u \in U,~ & r\geq 0 \Big\} \subset C^0(K,\R^d) \times \R,
\end{aligned}
\end{equation*}
are closed and convex. Then, there exists an optimal trajectory-control pair $(\mu^*(\cdot),u^*(\cdot))$ for $(\Ppazo)$.
\end{thm}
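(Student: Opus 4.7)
The approach is the direct method of the calculus of variations. Let $(\mu_n(\cdot), u_n(\cdot))$ be a minimizing sequence for $(\Ppazo)$. The coercivity $L(\mu,u) \geq \Cpazo^1 |u|^2$ combined with the uniform bound on the cost along the minimizing sequence yields a uniform $L^2$-bound on $(u_n(\cdot))$, whence $u_n \rightharpoonup u^*$ weakly in $L^2([0,T],\R^m)$ along a subsequence (the pointwise constraint $u^*(t) \in U$ is not yet guaranteed, and will be recovered later via selection). The quadratic-in-$u$ growth of $v$ together with this $L^2$-bound produces uniformly equi-integrable $L^1$-functions $m_n(t) := m(1+|u_n(t)|^2)$, so the well-posedness theorem for \eqref{eq:NonLocalCE} furnishes a common compact set $K \subset \R^d$ containing $\supp(\mu_n(t))$ for all $n$ and all $t \in [0,T]$, as well as a uniform $W_1$-equicontinuity modulus of $(\mu_n(\cdot))$ on $[0,T]$. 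A Wasserstein variant of the Arzel\`a--Ascoli theorem then produces $\mu^*(\cdot) \in C^0([0,T],(\Pcal(K),W_1))$ along a further subsequence such that $\mu_n \to \mu^*$ uniformly in $W_1$.

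The main difficulty is passing to the limit in the non-local continuity equation, since the quadratic dependence of $v$ on $u$ forbids any naive use of weak convergence. For each $(t,\mu)$ consider the augmented orientor
\begin{equation*}
F(t,\mu) := \Big\{ \big( v(t,\mu,u)|_K, \, L(\mu,u) + r\big) \,:\, u \in U, \, r \geq 0 \Big\} \subset C^0(K,\R^d) \times \R,
\end{equation*}
which is closed and convex by hypothesis. The sequence $\Phi_n(t) := \big( v(t,\mu_n(t),u_n(t))|_K,\, L(\mu_n(t),u_n(t))\big)$ is bounded and equi-integrable in $L^1([0,T],C^0(K,\R^d)\times\R)$, thanks to the $L^2$-bound on $u_n$, so by Dunford--Pettis and Mazur's lemma suitable convex combinations of $\Phi_n$ converge strongly in $L^1$ and, along a further subsequence, $\Lcal^1$-a.e.\ pointwise, to a limit $\Phi^*(t) = (\tilde v(t), \tilde L(t))$. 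The Lipschitz dependence of $v$ and $L$ on $\mu$ in \ref{hyp:OCP} together with the uniform $W_1$-convergence $\mu_n \to \mu^*$ let us replace $\mu_n$ by $\mu^*$ in the arguments of $\Phi_n$ up to vanishing error, so closed-convexity forces $\Phi^*(t) \in F(t,\mu^*(t))$ a.e. A measurable selection theorem of Filippov type then produces $u^*(\cdot) \in \U$ and a measurable $r^*(\cdot) \geq 0$ realising $\tilde v(t) = v(t,\mu^*(t),u^*(t))|_K$ and $\tilde L(t) = L(\mu^*(t),u^*(t)) + r^*(t)$ for $\Lcal^1$-a.e.\ $t \in [0,T]$.

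With $(\mu^*(\cdot), u^*(\cdot))$ in hand, admissibility and optimality are routine. Passing to the limit in the distributional form of \eqref{eq:NonLocalCE} along the convex combinations, using the uniform $W_1$-convergence of $\mu_n$ and the selection above, shows that $\mu^*(\cdot)$ solves the continuity equation driven by $v(t,\mu^*(t),u^*(t),\cdot)$ with initial datum $\mu^0$. For the cost, the Mazur passage combined with the slack $r^* \geq 0$ gives
\begin{equation*}
\INTSeg{L(\mu^*(t),u^*(t))}{t}{0}{T} \leq \INTSeg{\tilde L(t)}{t}{0}{T} \leq \liminf_{n\to\infty} \INTSeg{L(\mu_n(t),u_n(t))}{t}{0}{T},
\end{equation*}
while the Lipschitz bound on $\varphi$ together with $W_1(\mu_n(T),\mu^*(T)) \to 0$ forces $\varphi(\mu_n(T)) \to \varphi(\mu^*(T))$. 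Thus the total cost evaluated at $(\mu^*, u^*)$ does not exceed the infimum, and the pair is optimal. The hard step is the selection in the second paragraph: the orientor hypothesis must be invoked in the function-space target $C^0(K,\R^d) \times \R$ rather than in $\R^d \times \R$, and the slack variable $r \geq 0$ is precisely what allows closed-convexity to accommodate both the velocity and the Lagrangian simultaneously.
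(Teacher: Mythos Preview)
Your overall strategy---the direct method with an augmented orientor field and a Filippov-type measurable selection---is exactly the route the paper takes, the latter simply deferring the details to \cite{ContInc} after recording the uniform $L^2$-bound on the controls. Your write-up is considerably more explicit than the paper's two-line proof.

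There is, however, a genuine gap. From the coercivity $L(\mu,u)\ge \Cpazo^1|u|^2$ and the cost bound you correctly obtain $\sup_n\|u_n\|_{L^2}<\infty$, i.e.\ $\sup_n\int_0^T|u_n(t)|^2\,\textnormal{d}t<\infty$. But this only gives a uniform $L^1$-bound on $m_n(t)=m(1+|u_n(t)|^2)$, not equi-integrability: take $u_n=\sqrt{n}\,\mathbf{1}_{[0,1/n]}$ to see that $|u_n|^2$ need not be uniformly integrable. This undermines two of your steps simultaneously. First, the $W_1$-modulus $W_1(\mu_n(t),\mu_n(\tau))\le\int_\tau^t m_n(s)\,\textnormal{d}s$ no longer yields equicontinuity, so the Arzel\`a--Ascoli argument for $\mu_n(\cdot)$ breaks down as written. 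Second, and more seriously, Dunford--Pettis cannot be invoked on $\Phi_n$ without equi-integrability, so the passage to strongly convergent convex combinations is unjustified. (A related technical point: your $\Phi_n$ takes values in $C^0(K,\R^d)\times\R$, which is not reflexive; weak $L^1$-compactness there requires more than scalar Dunford--Pettis.)

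The gap is repairable but not for free. One standard cure is a time reparametrisation by arclength: setting $\sigma_n(t)=\int_0^t(1+m_n(s))\,\textnormal{d}s$ makes the reparametrised curves $1$-Lipschitz in $W_1$, restores Arzel\`a--Ascoli, and lets you pass to the limit in the reparametrised continuity equation before undoing the change of variables. Alternatively one may combine a Helly-type pointwise extraction for the uniformly-BV curves $\mu_n(\cdot)$ with a weak-$*$ (measure-valued) limit for $t\mapsto(v_n(t),\ell_n(t))$ and then use the closed convex orientor to recover an $L^1$ selection; the slack variable $r\ge0$ absorbs any loss of mass in the Lagrangian component. Either way, the claim ``$L^2$ bound on $u_n$ $\Rightarrow$ equi-integrability of $|u_n|^2$'' must be replaced by one of these devices.
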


\begin{proof}
Under hypothesis \ref{hyp:OCP}-$(ii)$, there exists for any minimising sequence $(\mu^*_N(\cdot),u^*_N(\cdot))$ a constant $\Cpazo_U > 0$ such that
\begin{equation*}
\sup_{N \geq 1} \INTSeg{|u^*_N(t)|^2}{t}{0}{T} \leq \Cpazo_U.
\end{equation*}
The result then follows up to minor modifications in the proofs of \cite[Theorem 6 and Theorem 7]{ContInc}.
\end{proof}

In the sequel, we will assume that $(\Ppazo)$ admits such an optimal pair $(\mu^*(\cdot),u^*(\cdot))$. By a simple adaptation of the arguments of \cite{SetValuedPMP}, it can be shown that the latter satisfies the following Pontryagin-type optimality conditions.

\begin{thm}[Pontryagin Maximum Principle]
\label{thm:PMP}
Let $(\mu^*(\cdot),u^*(\cdot))$ be an optimal pair for $(\Ppazo)$. Then, there exists a unique state-costate curve $\nu^* : [0,T] \mapsto \Pcal_c(\R^{2d})$ which satisfies the following.
\begin{enumerate}
\item[$(i)$] It solves the Hamiltonian continuity equation
\begin{equation*}
\left\{
\begin{aligned}
& \partial_t \nu^*(t) + \Div_{x,r} \big( \nabla_{\nu} \H(t,\nu^*(t),u^*(t)) \nu^*(t) \big) = 0, \\
& \pi^1_{\#} \nu^*(t) = \mu^*(t) \hspace{1.3cm} \text{for all times $t \in [0,T]$}, \\
& \nu^*(T) = (\Id,-\nabla \varphi(\mu^*(T)))_{\#} \mu^*(T),
\end{aligned}
\right.
\end{equation*}
where the Hamiltonian $\H$ is defined by
\begin{equation}
\label{eq:HamiltonianDef}
\H(t,\nu,u) := \INTDom{\langle r , v(t,\mu,u,x) \rangle}{\R^{2d}}{\nu(x,r)} - L(\mu,u),
\end{equation}
with $\mu := \pi^1_{\#} \nu$ and $\nabla_{\nu} \H(t,\nu^*(t),u^*(t))$ being its gradient given in the sense of Definition \ref{def:DiffWass}.
\item[$(ii)$] The maximisation condition
\begin{equation}
\label{eq:HamiltonianMax}
\H(t,\nu^*(t),u^*(t)) = \max_{u \in U} \, \H(t,\nu^*(t),u),
\end{equation}
holds for $\Lcal^1$-almost every $t \in [0,T]$.
\end{enumerate}
\end{thm}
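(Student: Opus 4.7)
The plan is to adapt the strategy of \cite{SetValuedPMP}, whose needle-variation machinery carries over verbatim to Bolza problems once the running cost is incorporated appropriately. First, I would reduce $(\Ppazo)$ to a Mayer problem on the augmented Wasserstein space by introducing the running-cost coordinate $y(t) := \INTSeg{L(\mu(s),u(s))}{s}{0}{t}$ and minimising the terminal functional $\mu \mapsto \varphi(\mu) + y(T)$ over the augmented continuity equation in $(x,y)$. Under \ref{hyp:OCP}-$(ii)$, the bound $L(\mu,u) \geq \Cpazo^1 |u|^2$ confines minimising controls to a bounded ball of $L^2([0,T],U)$, keeping the augmented non-local field within the Cauchy–Lipschitz class \ref{hyp:CE}.

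Next, I would perform standard needle variations: at a Lebesgue point $\tau \in (0,T)$ of $u^*(\cdot)$ and for $\omega \in U$, define $u_\epsilon(t) = \omega$ on $[\tau,\tau+\epsilon]$ and $u^*(t)$ elsewhere. Writing $\mu_\epsilon(\cdot)$ for the resulting solution of \eqref{eq:NonLocalCE}, a Gr\"onwall argument based on \ref{hyp:OCP}-$(i)$ and the representation \eqref{eq:Semigroup} yields the first-order expansion $\mu_\epsilon(t) = (\Id + \epsilon\, \eta_t + o(\epsilon))_{\#} \mu^*(t)$ for $t \geq \tau$, where the transport field $\eta_t \in L^2(\R^d,\R^d;\mu^*(t))$ solves the linearisation
\begin{equation*}
\partial_t \eta_t(x) = \nabla_x v(t,\mu^*(t),u^*(t),x)\, \eta_t(x) + \big(\nabla_\mu v(t,\mu^*(t),u^*(t),x)\big)[\eta_t],
\end{equation*}
with Dirac-type initial datum at $t=\tau$ reflecting the jump $v(\tau,\mu^*(\tau),\omega,\cdot) - v(\tau,\mu^*(\tau),u^*(\tau),\cdot)$. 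Invoking Proposition \ref{prop:Chainrule} and the differentiability in \ref{hyp:OCP}-$(iv)$ then gives the first-order expansion of $\varphi(\mu_\epsilon(T))+y_\epsilon(T)-\varphi(\mu^*(T))-y^*(T)$ as $\epsilon$ times an expression linear in $\eta$.

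The crux is to contract this expression into a single evaluation at $\tau$ by duality. I would define the costate vector field $r^*(t,\cdot) \in L^2(\R^d,\R^d;\mu^*(t))$ as the backward solution of the transposed linearised equation started from $r^*(T,\cdot) = -\nabla\varphi(\mu^*(T))$, with a source term coming from $\nabla_\mu L(\mu^*(t),u^*(t))$. A direct computation shows that the pairing $\langle r^*(t), \eta_t\rangle_{L^2(\mu^*(t))}$ plus the accumulated cost integral is constant in $t \geq \tau$, which collapses the first variation to a local test at $\tau$ involving only $\omega$. Setting $\nu^*(t) := (\Id, r^*(t))_{\#} \mu^*(t)$, the particle-wise backward ODE for $r^*$ translates, via the continuity equation on $\R^{2d}$, into precisely $\partial_t \nu^* + \Div_{x,r}(\nabla_\nu \H(t,\nu^*,u^*)\nu^*) = 0$ with the prescribed terminal condition; uniqueness of $\nu^*$ follows from uniqueness of $r^*$ along the backward Cauchy--Lipschitz equation. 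Optimality of $u^*$ finally forces the first variation to be nonnegative for every $(\tau,\omega)$, and Lebesgue differentiation then yields the pointwise maximisation \eqref{eq:HamiltonianMax}.

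The main obstacle will be the non-local coupling in the linearised equation for $\eta_t$: because both $v$ and $L$ depend on $\mu$, the adjoint field $r^*$ cannot be computed fiberwise along characteristics of $v(t,\mu^*(t),u^*(t),\cdot)$ alone but must absorb, through its source term, contributions of the measure differentials $\nabla_\mu v$ and $\nabla_\mu L$. Verifying that these contributions do assemble into the gradient $\nabla_\nu \H$ in the sense of Definition \ref{def:DiffWass}, and that the resulting backward ODE remains within the framework where uniqueness holds on the compact support of $\mu^*(\cdot)$, is the technical bulk of the argument, and is exactly what is achieved by the arguments of \cite{SetValuedPMP} adapted to the present Bolza-type cost.
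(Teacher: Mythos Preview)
Your proposal is correct and follows essentially the same route the paper indicates: the paper does not give an explicit proof of Theorem~\ref{thm:PMP} but simply states that it follows ``by a simple adaptation of the arguments of \cite{SetValuedPMP}'', and your needle-variation/linearisation/backward-adjoint scheme is precisely that adaptation. The only cosmetic difference is your preliminary Mayer reduction via the auxiliary scalar variable $y(t)=\INTSeg{L(\mu(s),u(s))}{s}{0}{t}$; since $L$ depends on the whole measure rather than pointwise on $x$, this augmentation lives in $\Pcal_c(\R^d)\times\R$ rather than in a higher-dimensional Wasserstein space, and in practice one can equally well keep the running cost explicit and absorb $\nabla_\mu L$ directly as a source term in the backward equation for $r^*$, which is what you end up doing anyway.
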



\section{Properties of the value function}
\label{section:Sensibility}

In this section, we leverage some of the tools introduced above  to  study fine properties of the \textit{value function}
\begin{equation*}
\begin{aligned}
\Vcal(\tau,\mu&) := \\
& \left\{
\begin{aligned}
\min_{u(\cdot) \in \U} & \bigg[ \INTSeg{L(\mu(t),u(t))}{t}{\tau}{T} + \varphi(\mu(T)) \bigg], \\
\text{s.t.} ~ & \left\{
\begin{aligned}
& \partial_t \mu(t) + \Div_x(v(t,\mu(t),u(t))\mu(t)) = 0, \\
& \mu(\tau) = \mu,
\end{aligned}
\right.
\end{aligned}
\right.
\end{aligned}
\end{equation*}
associated to $(\Ppazo)$, defined for $(\tau,\mu) \in [0,T] \times \Pcal_c(\R^d)$. A first key observation is that under our working assumptions, the mapping $\Vcal(\cdot,\cdot)$ is absolutely continuous in its first argument and Lipschitz in the second one.

\begin{prop}[Regularity]
\label{prop:LipReg}
Suppose that hypotheses \ref{hyp:OCP} hold, and let $r > 0$. Then, there exist $\M_r(\cdot) \in L^1([0,T],\R_+)$ and $\Lpazo_r > 0$ such that
\begin{equation*}
|\Vcal(\tau_2,\mu_2) - \Vcal(\tau_1,\mu_1)| \leq \INTSeg{\M_r(t)}{t}{\tau_1}{\tau_2} + \Lpazo_r W_1(\mu_1,\mu_2),
\end{equation*}
for every $\tau_1,\tau_2 \in [0,T]$ and all $\mu_1,\mu_2 \in \Pcal(B(0,r))$.
\end{prop}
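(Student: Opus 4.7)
I would decompose the estimate via the triangle inequality
\[
|\Vcal(\tau_2, \mu_2) - \Vcal(\tau_1, \mu_1)| \leq |\Vcal(\tau_2, \mu_2) - \Vcal(\tau_2, \mu_1)| + |\Vcal(\tau_2, \mu_1) - \Vcal(\tau_1, \mu_1)|,
\]
and treat the spatial Lipschitz term at fixed time and the temporal absolute continuity term at fixed measure as separate tasks. A crucial preliminary is a uniform $L^2$-bound on optimal controls: from $L(\mu, u) \geq \Cpazo^1 |u|^2$ in \ref{hyp:OCP}-$(ii)$ together with an upper bound on $\Vcal(\tau, \mu)$ obtained by plugging any fixed $u_0 \in U$ (constant in time) into the system and using the growth bounds on $L$ and $\varphi$, every optimal control $u^*$ associated with an initial datum in $\Pcal(B(0,r))$ satisfies $\int_\tau^T |u^*(t)|^2 \, \textnormal{d}t \leq C_r^u$ for some constant $C_r^u > 0$.

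For the spatial Lipschitz bound, fix $\tau$ and assume without loss of generality that $\Vcal(\tau, \mu_2) \geq \Vcal(\tau, \mu_1)$. Letting $u^*$ be optimal for $\Vcal(\tau, \mu_1)$, I would run the system with this same $u^*$ starting from each of $\mu_1, \mu_2$ and produce trajectories $\mu_1(\cdot), \mu_2(\cdot)$. A Gr\"onwall estimate for the non-local continuity equation, driven by the $L^1$-integrable Lipschitz bound $\ell_K(1+|u^*(t)|^2)$ from \ref{hyp:OCP}-$(i)$, yields
\[
W_1(\mu_1(t), \mu_2(t)) \leq \exp\!\big(\ell_K(T+C_r^u)\big) \, W_1(\mu_1, \mu_2),
\]
and combining with the $W_1$-Lipschitz bounds on $L$ and $\varphi$ in \ref{hyp:OCP}-$(ii),(iii)$ applied on the enlarged compact $\overline{B(0,R_r)}$, the cost difference along these trajectories produces the required bound $|\Vcal(\tau, \mu_2) - \Vcal(\tau, \mu_1)| \leq \Lpazo_r W_1(\mu_1, \mu_2)$ with an explicit $\Lpazo_r > 0$ depending only on $r$.

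For the temporal absolute continuity, fix $\mu \in \Pcal(B(0,r))$ and $\tau_1 < \tau_2$, and handle the two directions of $|\Vcal(\tau_2, \mu) - \Vcal(\tau_1, \mu)|$ separately. The bound on $\Vcal(\tau_1, \mu) - \Vcal(\tau_2, \mu)$ follows from inserting a fixed $u_0 \in U$ on $[\tau_1, \tau_2]$ then the optimal control on $[\tau_2, T]$: using the continuity equation estimate $W_1(\mu, \mu_{u_0}(\tau_2)) \leq m(1+2R_r)(1+|u_0|^2)(\tau_2 - \tau_1)$ and the just-proved spatial Lipschitz bound, both the direct cost on $[\tau_1, \tau_2]$ and the spatial correction between $\Vcal(\tau_2, \mu_{u_0}(\tau_2))$ and $\Vcal(\tau_2, \mu)$ are $O(\tau_2 - \tau_1)$. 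For the reverse direction, use the optimal pair $(\mu^*(\cdot), u^*(\cdot))$ for $\Vcal(\tau_1, \mu)$ together with Bellman's principle; after applying the spatial Lipschitz bound between $\Vcal(\tau_2, \mu)$ and $\Vcal(\tau_2, \mu^*(\tau_2))$, one arrives at
\[
\Vcal(\tau_2, \mu) - \Vcal(\tau_1, \mu) \leq \Lpazo_r W_1(\mu, \mu^*(\tau_2)) - \int_{\tau_1}^{\tau_2} L(\mu^*(t), u^*(t)) \, \textnormal{d}t.
\]

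The main obstacle is to control this right-hand side by a $\mu$-uniform $L^1$-density in time, since the natural bound $W_1(\mu, \mu^*(\tau_2)) \leq m(1+2R_r)\int_{\tau_1}^{\tau_2}(1+|u^*(s)|^2) \, \textnormal{d}s$ contains a quadratic term in $u^*$ whose time-density depends on $\mu$. The saving trick is to absorb it via the coercivity $L \geq \Cpazo^1 |u^*|^2$, producing
\[
\Vcal(\tau_2, \mu) - \Vcal(\tau_1, \mu) \leq \Lpazo_r m(1+2R_r)(\tau_2 - \tau_1) + \big[\Lpazo_r m(1+2R_r) - \Cpazo^1\big] \int_{\tau_1}^{\tau_2} |u^*(s)|^2 \, \textnormal{d}s,
\]
which yields the desired Lipschitz-in-$\tau$ estimate whenever the bracketed coefficient is non-positive. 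I expect this last condition to be the technical crux: it can be secured either by sharpening the spatial Lipschitz constant via a time-splitting concatenation argument on sufficiently short subintervals, or by invoking uniform integrability of the family $\{|u^*|^2\}_\mu$ to promote the pointwise-in-$\mu$ absolute continuity to a uniform one.
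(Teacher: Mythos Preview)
Your approach coincides with the paper's: the proof there is a one-sentence pointer to the Lipschitz dependence of solutions of non-local continuity equations on their initial data (citing \cite{ContInc}) together with the sublinearity and Lipschitz regularity of $L$ and $\varphi$. Your triangle-inequality decomposition, the uniform $L^2$-bound on optimal controls via the coercivity $L \geq \Cpazo^1|u|^2$, and the Gr\"onwall comparison of two trajectories driven by the same control are exactly the ingredients that sketch is alluding to; you have simply spelled them out in far greater detail than the paper does.

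The obstacle you isolate in the last paragraph --- that the natural bound on $W_1(\mu,\mu^*(\tau_2))$ contains $\int_{\tau_1}^{\tau_2}|u^*(s)|^2\,\textnormal{d}s$, whose time-density depends on $\mu$ --- is a genuine subtlety that the paper's one-line proof does not address explicitly. Your absorption via coercivity is the standard device in Bolza problems with quadratic control growth, and your instinct that a time-localisation argument is needed when the bracket $[\Lpazo_r m(1+2R_r)-\Cpazo^1]$ has the wrong sign is correct in spirit; note however that splitting $[\tau_1,\tau_2]$ alone will not shrink $\Lpazo_r$, since that constant comes from Gr\"onwall over $[\tau_2,T]$. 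A cleaner route is to reparametrise time: take $u^*$ optimal for $\Vcal(\tau_1,\mu)$, set $\hat u(t)=u^*(\sigma(t))$ with $\sigma:[\tau_2,T]\to[\tau_1,T]$ affine, run the dynamics from $(\tau_2,\mu)$ with $\hat u$, and compare the resulting trajectory to $\mu^*\circ\sigma$. The Lipschitz-in-$t$ hypothesis on $v$ in \ref{hyp:OCP}-$(i)$ controls the non-autonomy error by $C|\tau_2-\tau_1|$, the Jacobian $\sigma'=\tfrac{T-\tau_1}{T-\tau_2}$ contributes a factor $1+O(|\tau_2-\tau_1|)$, and the uniform $L^2$-bound $C_r^u$ on $u^*$ keeps the remaining integrals under control. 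This yields directly $\Vcal(\tau_2,\mu)-\Vcal(\tau_1,\mu)\leq C_r|\tau_2-\tau_1|$ without ever invoking the spatial Lipschitz constant of $\Vcal(\tau_2,\cdot)$, and so sidesteps the competition between $\Lpazo_r m(1+2R_r)$ and $\Cpazo^1$ altogether.
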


\begin{proof}
This result follows from the Lipschitz dependence of solutions of non-local continuity equations on their initial data (see \cite{ContInc}) together with the sublinearity  and Lipschitz regularity of the cost functionals.
\end{proof}

\smallskip

In what follows, we will first investigate the so-called sensitivity relations involving the costates of the PMP of Theorem \ref{thm:PMP}, and then semiconcavity properties of $\Vcal(\cdot,\cdot)$. Given $u(\cdot) \in \U$, we will use the notation $(\Phi^u_{(\tau,t)}[\cdot](\cdot))_{\tau,t \in [0,T]}$ for the non-local flows generated by $(t,\mu,x) \mapsto v(t,\mu,u(t),x) \in \R^d$ defined as in \eqref{eq:Semigroup}.


\subsection{Sensitivity analysis and Pontryagin costates}

In our context, the sensitivity relations link the state-costate curves $\nu^*(\cdot)$ of the maximum principle and the \textit{Hadamard superdifferential} of the value function.

\begin{Def}[Hadamard superdifferentials]
Let $(\tau,\mu) \in [0,T] \times \Pcal_c(\R^d)$. Then, we say that a pair $(\delta,\xi) \in \R \times L^2(\R^d,\R^d;\mu)$ belongs to the Hadamard superdifferential $\eth^+ \Vcal(\tau,\mu)$ of $\Vcal(\cdot,\cdot)$ at $(\tau,\mu)$ provided that for all $(h,\F) \in \R \times L^{\infty}(\R^d,\R^d;\mu)$
\begin{equation*}
\begin{aligned}
& \limsup_{\substack{\epsilon \rightarrow 0^+ \\ \tau + \epsilon h \in [0,T]}} \Big[ \frac{\Vcal(\tau + \epsilon h, (\Id + \epsilon \F)_{\#} \mu) - \Vcal(\tau,\mu)}{\epsilon} \Big] \\
& \hspace{5cm} \leq \delta h + \langle \xi,\F\rangle_{L^2(\mu)}.
\end{aligned}
\end{equation*}
\end{Def}
\vspace{2mm}

On vector spaces, Hadamard superdifferentials are usually defined in terms of contingent directional derivatives. Yet, we showed in Proposition \ref{prop:LipReg} above that under \ref{hyp:OCP}, the map $\Vcal(t,\cdot)$ is locally Lipschitz over $\Pcal_c(\R^d)$ with local Lipschitz constants independent of $t \in [0,T]$. This allows us to use the simpler   upper  Dini directional derivatives to define Hadamard superdifferentials.

\begin{thm}[Hadamard-type sensitivity]
\label{thm:Sensitivity}
Suppose that hypotheses \ref{hyp:OCP} hold. Let $(\mu^*(\cdot),u^*(\cdot))$ be a solution of $(\Ppazo)$ and $\nu^*(\cdot)$ be the corresponding state-costate curve given by Theorem \ref{thm:PMP}. Then, it holds
\begin{equation*}
\big( \H(t,\nu^*(t),u^*(t)) , -\bar{\nu}^*(t) \big) \in \eth^+ \Vcal(t,\mu^*(t)),
\end{equation*}
for $\Lcal^1$-almost every $t \in [0,T]$, where the map $\bar{\nu}^*(t) \in L^2(\R^d,\R^d;\mu^*(t))$ denotes the \textit{barycentric projection} of $\nu^*(t)$ onto its first marginal $\pi^1_{\#} \nu^*(t) = \mu^*(t)$.
\end{thm}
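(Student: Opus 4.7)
The plan is to combine dynamic programming with a first-order variational analysis of the mean-field dynamics and its adjoint system, in the spirit of the classical Cannarsa–Frankowska sensitivity calculus transposed to $\Pcal_c(\R^d)$. Fix a test pair $(h,\F) \in \R \times L^{\infty}(\R^d,\R^d;\mu^*(t))$ and a time $t\in [0,T)$ that is a common Lebesgue point of the maps involved below. I treat $h\geq 0$; the opposite sign is handled analogously by extending $u^*$ over $[t+\epsilon h, t]$ with any fixed admissible control.

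\textbf{Step 1 (Dynamic programming upper bound).} Setting $\mu^{\epsilon} := (\Id+\epsilon \F)_{\#}\mu^*(t)$, let $\mu^{\epsilon}(\cdot)$ be the unique solution of the non-local continuity equation driven by $u^*$ and issued from $\mu^{\epsilon}$ at time $t+\epsilon h$. Dynamic programming yields
\begin{equation*}
\Vcal(t+\epsilon h,\mu^{\epsilon}) \leq \INTSeg{L(\mu^{\epsilon}(s),u^*(s))}{s}{t+\epsilon h}{T} + \varphi(\mu^{\epsilon}(T)),
\end{equation*}
while the optimality of $u^*$ at $(t,\mu^*(t))$ gives the analogous identity with $=$ for the reference trajectory. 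Subtracting exhibits three contributions: a boundary-layer integral on $[t,t+\epsilon h]$, a Lagrangian difference on $[t+\epsilon h,T]$, and a terminal cost difference.

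\textbf{Step 2 (Lebesgue expansion and linearised flow).} The boundary-layer integral contributes $-\epsilon h\, L(\mu^*(t),u^*(t))+o(\epsilon)$ at Lebesgue points. For the remaining two terms, the Lipschitz sensitivity of non-local continuity equations with respect to their initial data enables the expansion $\mu^{\epsilon}(s) = (\Id+\epsilon\, w_s)_{\#}\mu^*(s) + o_{W_1}(\epsilon)$, where $w_s \in L^2(\R^d,\R^d;\mu^*(s))$ solves the formal linearisation of the continuity equation along $(\mu^*,u^*)$ with initial datum $w_t = \F - h\, v(t,\mu^*(t),u^*(t),\cdot)$. The correction $-h v$ accounts for the time shift $\epsilon h$. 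Applying Proposition \ref{prop:Chainrule} and exploiting the local $C^1$ regularity postulated in \ref{hyp:OCP}-$(iv)$, one obtains
\begin{equation*}
L(\mu^{\epsilon}(s),u^*(s)) - L(\mu^*(s),u^*(s)) = \epsilon \langle \nabla L(\mu^*(s),u^*(s)),w_s\rangle_{L^2(\mu^*(s))} + o(\epsilon),
\end{equation*}
and the analogous expansion for $\varphi(\mu^{\epsilon}(T)) - \varphi(\mu^*(T))$.

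\textbf{Step 3 (Adjoint duality identity).} The crux of the proof consists in using that $\nu^*(\cdot)$ solves the Hamiltonian continuity equation of Theorem \ref{thm:PMP}-$(i)$ to establish the pointwise duality
\begin{equation*}
\derv{}{s}\langle \bar{\nu}^*(s), w_s\rangle_{L^2(\mu^*(s))} = \langle \nabla L(\mu^*(s),u^*(s)), w_s\rangle_{L^2(\mu^*(s))}.
\end{equation*}
The characteristics of $\nabla_\nu \H$ in the $x$-direction coincide with those of the state equation, while the $r$-direction generates an adjoint dynamics of the form $\dot R = -D_x v^{\top} R + (\text{non-local correction})$; the non-local correction is, by construction of $\nabla_\nu \H$ through Proposition \ref{prop:Chainrule}, exactly the one that cancels the first-order variation of $\mu^*$ entering the linearised equation for $w_s$. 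Integrating from $t$ to $T$ against the terminal condition $\bar{\nu}^*(T)=-\nabla\varphi(\mu^*(T))$ and using Theorem \ref{thm:Disintegration} together with the definition \eqref{eq:HamiltonianDef} of $\H$ to identify $\langle \bar{\nu}^*(t), v(t,\mu^*(t),u^*(t),\cdot)\rangle_{L^2(\mu^*(t))} = L(\mu^*(t),u^*(t)) + \H(t,\nu^*(t),u^*(t))$, we conclude
\begin{equation*}
\INTSeg{\langle \nabla L,w_s\rangle_{L^2(\mu^*(s))}}{s}{t}{T} + \langle \nabla\varphi(\mu^*(T)),w_T\rangle_{L^2(\mu^*(T))} = -\langle \bar{\nu}^*(t),\F\rangle_{L^2(\mu^*(t))} + h\bigl[L(\mu^*(t),u^*(t))+\H(t,\nu^*(t),u^*(t))\bigr].
\end{equation*}

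\textbf{Step 4 (Assembly).} Summing the three contributions of Step 1, dividing by $\epsilon$, and passing to the $\limsup$ as $\epsilon\to 0^+$, the $\pm h\, L(\mu^*(t),u^*(t))$ terms cancel and I obtain
\begin{equation*}
\limsup_{\epsilon\to 0^+}\frac{\Vcal(t+\epsilon h,(\Id+\epsilon \F)_{\#}\mu^*(t)) - \Vcal(t,\mu^*(t))}{\epsilon} \leq h\,\H(t,\nu^*(t),u^*(t)) + \langle -\bar{\nu}^*(t),\F\rangle_{L^2(\mu^*(t))},
\end{equation*}
which is exactly the claimed membership in $\eth^+\Vcal(t,\mu^*(t))$.

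The main obstacle will be Step 3: rigorously deriving the duality identity in the non-local setting. One must identify both components of $\nabla_\nu \H$ on $\R^{2d}$ from \eqref{eq:HamiltonianDef} through Proposition \ref{prop:Chainrule}, exploit the graph structure of $\nu^*(T)$ to propagate $\bar{\nu}^*(s)$ via the backward characteristics, and verify that the non-local Wasserstein gradient terms entering the linearisation for $w_s$ match those of the adjoint with opposite sign. The Lipschitz and local $C^1$ hypotheses of \ref{hyp:OCP} are precisely what ensures that the integration by parts underlying this duality is licit and that all $o(\epsilon)$ bounds are uniform on the relevant compact sets.
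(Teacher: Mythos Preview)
Your proposal is correct and follows essentially the same route as the paper's proof: a dynamic-programming upper bound for the perturbed initial datum, a first-order linearisation of the non-local flow yielding a variational vector field $w$, application of the chain rule of Proposition~\ref{prop:Chainrule} to $L$ and $\varphi$, and then an adjoint-duality computation exploiting the Hamiltonian dynamics of $\nu^*(\cdot)$ to collapse the integrated gradient terms into $\H(t,\nu^*(t),u^*(t))\,h - \langle \bar{\nu}^*(t),\F\rangle_{L^2(\mu^*(t))}$. The only organisational difference is that you isolate the boundary-layer integral on $[t,t+\epsilon h]$ and encode the time shift through the initial datum $w_t = \F - h\,v(t,\mu^*(t),u^*(t),\cdot)$, whereas the paper keeps $h$ inside the perturbed flow $\Phi^{u^*}_{(\tau+\epsilon h,t)}$ and records the dependence directly in $w_{h,\F}$; these are equivalent bookkeepings of the same expansion.
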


\begin{proof}
The proof of this result is fairly long and technical, and will thus be published in greater details elsewhere. For the convenience of the reader, we sketch the main steps of the underlying arguments.

Given elements $(\tau,\epsilon) \in [0,T] \times \R_+$ and $(h,\F) \in \R \times L^{\infty}(\R^d,\R^d;\mu^*(\tau))$, the perturbed curve $\tilde{\mu}_{\epsilon}(\cdot)$ defined by
\begin{equation*}
\tilde{\mu}_{\epsilon}(t) := \Big( \Phi^{u^*}_{(\tau+\epsilon h,t)}[(\Id+\epsilon \F)_{\#}\mu^*
(\tau)] \circ (\Id+\epsilon \F) \Big)_{\raisebox{4pt}{$\scriptstyle{\#}$}}\mu^*(\tau),
\end{equation*}
for all times $t \in [\tau,T]$ satisfies
\begin{equation*}
\begin{aligned}
\Vcal(\tau+\epsilon h , (\Id+&\epsilon \F)_{\#} \mu^*(\tau)) \\
& \leq \INTSeg{L(\tilde{\mu}_{\epsilon}(t),u^*(t))}{t}{\tau+\epsilon h}{T} + \varphi(\tilde{\mu}_{\epsilon}(T)),
\end{aligned}
\end{equation*}
by definition of $\Vcal(\cdot,\cdot)$. Following technical linearisation arguments in the spirit of \cite{PMPWassConst,PMPWass}, one can then show
\begin{equation*}
\begin{aligned}
& \Phi^{u^*}_{(\tau+\epsilon h,t)}[(\Id+\epsilon \F)_{\#}\mu^*
(\tau)](x+\epsilon \F(x)) \\
& \hspace{1.3cm} = \Phi^*_{(\tau,t)}(x) + \epsilon w_{h,\F} \big(t,\Phi^*_{(t,\tau)}(x)\big) +o_{\tau,t,x}(\epsilon),
\end{aligned}
\end{equation*}
with $\sup_{(\tau,t,x)}|o_{\tau,t,x}(\epsilon)| = o(\epsilon)$ and where we denoted $\Phi^*_{(\tau,t)}(x) := \Phi^{u^*}_{(\tau,t)}[\mu^*(\tau)](x)$. Therein, $t \in [\tau,T] \mapsto w_{h,\F}(t,y)$ solves a suitable linearised Cauchy problem. Using the chain rule of Proposition \ref{prop:Chainrule} along with the optimality of $(\mu^*(\cdot),u^*(\cdot))$, the following inequality
\begin{equation*}
\begin{aligned}
& \tfrac{1}{\epsilon} \Big( \Vcal(\tau+\epsilon h , (\Id+\epsilon \F)_{\#} \mu^*(\tau)) - \Vcal(\tau,\mu^*(\tau)) \Big) \\
& \leq \INTSeg{\langle \nabla_{\mu} L(\mu^*(t),u^*(t)) , w_{h,\F}(t,\Phi^*_{(t,\tau)}(\cdot)) \rangle_{L^2(\mu^*(\tau))}}{t}{\tau}{T} \\
& \hspace{0.4cm} + \langle \nabla\varphi(\mu^*(T)), w_{h,\F}(T,\Phi_{(T,\tau)}^*(\cdot)) \rangle_{L^2(\mu^*(T))} \\
&\hspace{0.4cm} - h L(\mu^*(\tau),u^*(\tau)) + o(1),
\end{aligned}
\end{equation*}
holds for every $\epsilon > 0$ provided that $\tau \in [0,T]$ belongs to a suitable subset of full $\Lcal^1$-measure. Upon performing lengthy computations involving the dynamics of the state-costate curve $\nu^*(\cdot)$ introduced in Theorem \ref{thm:PMP}, this inequality can be further reformulated as
\begin{equation}
\label{eq:Est1}
\begin{aligned}
& \tfrac{1}{\epsilon} \Big( \Vcal(\tau+\epsilon h , (\Id+\epsilon \F)_{\#} \mu^*(\tau)) - \Vcal(\tau,\mu^*(\tau)) \Big) + o(1)\\
& \hspace{0.4cm} \leq \H(\tau,\nu^*(\tau),u^*(\tau)) \, h - \INTDom{\langle r , \F(x) \rangle}{\R^{2d}}{\nu^*(\tau)(x,r)} \\
& \hspace{0.4cm} \leq \H(\tau,\nu^*(\tau),u^*(\tau)) \, h - \langle \bar{\nu}^*(\tau),\F\rangle_{L^2(\mu^*(\tau))},
\end{aligned}
\end{equation}
for any $\epsilon > 0$ small enough, and where we used the barycentric projection of $\nu^*(\tau)$ defined as in Theorem \ref{thm:Disintegration}. We can then take the $limsup$ as $\epsilon \rightarrow 0^+$ in \eqref{eq:Est1} for $\Lcal^1$-almost every $t \in [0,T]$, which ends the proof.
\end{proof}

The main application of the sensitivity relations is to provide \textit{sufficient} optimality conditions for the PMP.

\begin{thm}[Sufficient optimality conditions]
Let $(\mu^*(\cdot),u^*(\cdot))$ be an admissible pair for $(\Ppazo)$ and $\nu^*(\cdot)$ be an associated state-costate curve satisfying the PMP of Theorem \ref{thm:PMP} and the sensitivity relation of Theorem \ref{thm:Sensitivity}. Then, the pair $(\mu^*(\cdot),u^*(\cdot))$ is optimal for $(\Ppazo)$.
\end{thm}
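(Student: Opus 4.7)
The plan is to show that the function
$g : t \in [0,T] \mapsto \Vcal(t, \mu^*(t)) + \INTSeg{L(\mu^*(s),u^*(s))}{s}{0}{t}$
satisfies $g(T) \leq g(0)$. Since by construction $g(0) = \Vcal(0,\mu^0)$ is the infimum of $(\Ppazo)$ while $g(T) = \varphi(\mu^*(T)) + \INTSeg{L(\mu^*(s),u^*(s))}{s}{0}{T}$ equals the cost of the candidate pair $(\mu^*(\cdot),u^*(\cdot))$, this inequality forces equality and yields the optimality of $(\mu^*(\cdot), u^*(\cdot))$.

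Absolute continuity of $g$ follows from Proposition \ref{prop:LipReg} combined with the $W_1$-absolute continuity of $t \mapsto \mu^*(t)$, the latter being a standard consequence of \ref{hyp:OCP}-$(i)$ and $u^*(\cdot) \in L^2$. Hence it suffices to prove $D^+ g(t) \leq 0$ at $\Lcal^1$-a.e.\ $t \in [0,T]$. To that end, I would fix a time $t$ which is simultaneously a Lebesgue point of $u^*(\cdot)$, of $s \mapsto L(\mu^*(s), u^*(s))$, and (in a suitable uniform-in-$x$ sense on the supports of $\mu^*(s)$) of $s \mapsto v(s, \mu^*(s), u^*(s), \cdot)$, and at which the sensitivity relation of Theorem \ref{thm:Sensitivity} holds. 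Setting $\F_t(\cdot) := v(t, \mu^*(t), u^*(t), \cdot)$, which by \ref{hyp:OCP}-$(i)$ is bounded on the compact set $\supp \mu^*(t)$ and hence lies in $L^\infty(\R^d, \R^d; \mu^*(t))$, a first-order expansion of the non-local flow \eqref{eq:Semigroup} yields $\Phi^{u^*}_{(t, t+\epsilon)}[\mu^*(t)](x) = x + \epsilon \F_t(x) + o(\epsilon)$ uniformly in $x \in \supp \mu^*(t)$, and consequently $W_1(\mu^*(t+\epsilon), (\Id + \epsilon \F_t)_{\#}\mu^*(t)) = o(\epsilon)$.

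Combining this approximation with the $W_1$-Lipschitz bound of Proposition \ref{prop:LipReg}, one can replace $\mu^*(t+\epsilon)$ by $(\Id + \epsilon \F_t)_{\#}\mu^*(t)$ in the forward difference quotient of $\Vcal$ up to an $o(1)$ correction, and then apply the sensitivity relation with $(h, \F) = (1, \F_t)$ to get
\[ \limsup_{\epsilon \to 0^+} \tfrac{1}{\epsilon} \big( \Vcal(t+\epsilon, \mu^*(t+\epsilon)) - \Vcal(t, \mu^*(t)) \big) \leq \H(t, \nu^*(t), u^*(t)) - \langle \bar{\nu}^*(t), \F_t \rangle_{L^2(\mu^*(t))}. \]
By the barycentric projection property of Theorem \ref{thm:Disintegration} and the definition \eqref{eq:HamiltonianDef} of the Hamiltonian, the right-hand side collapses to $-L(\mu^*(t), u^*(t))$. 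Adding the time-derivative $+L(\mu^*(t), u^*(t))$ contributed by the integral term of $g$ yields $D^+ g(t) \leq 0$ as required, and the standard fact that an absolutely continuous function with non-positive derivative almost everywhere is non-increasing closes the argument.

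The main technical obstacle is the uniform-in-space first-order expansion of the non-local flow at an a.e.\ time $t$: although $u^*(\cdot)$ is only $L^2$ and the time dependence in \ref{hyp:OCP}-$(i)$ is of $L^1$-type, the desired expansion holds at common Lebesgue points of the data via a Gr\"onwall estimate, after which the continuity of $v$ in $u$ absorbs the control-dependent error. The technical linearisation ingredients already alluded to in the proof sketch of Theorem \ref{thm:Sensitivity} supply the necessary estimates.
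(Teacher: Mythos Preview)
Your proof is correct and follows essentially the same route as the paper: both arguments use the absolute continuity of $t \mapsto \Vcal(t,\mu^*(t))$ from Proposition~\ref{prop:LipReg}, the first-order expansion $\Phi^{u^*}_{(t,t+\epsilon)}[\mu^*(t)](x) = x + \epsilon v(t,\mu^*(t),u^*(t),x) + o(\epsilon)$ to replace $\mu^*(t+\epsilon)$ by $(\Id+\epsilon v)_{\#}\mu^*(t)$, and then the sensitivity relation together with the definition~\eqref{eq:HamiltonianDef} of $\H$ to obtain the upper bound $-L(\mu^*(t),u^*(t))$ on the forward difference quotient. The only cosmetic difference is that the paper also records the reverse inequality $\tderv{}{t}\Vcal(t,\mu^*(t)) \geq -L(\mu^*(t),u^*(t))$ from the dynamic programming principle to conclude equality of the derivative, whereas you bypass this by observing directly that $g(T)\geq g(0)$ since any admissible cost dominates the infimum.
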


\begin{proof}
Observe that a pair $(\mu^*(\cdot),u^*(\cdot))$ is optimal for $(\Ppazo)$ if and only if
\begin{equation*}
\Vcal(\tau_2,\mu^*(\tau_2)) = \Vcal(\tau_1,\mu^*(\tau_1)) - \INTSeg{L(\mu^*(t),u^*(t))}{t}{\tau_1}{\tau_2},
\end{equation*}
for all $0 \leq \tau_1 \leq \tau_2 \leq T$.
By Proposition \ref{prop:LipReg}, the map $t \in [0,T] \mapsto \Vcal(t,\mu^*(t))$ is absolutely continuous, and by definition of the value function it necessary holds
\begin{equation}
\label{eq:ValueEst0}
\tderv{}{t} \Vcal(t,\mu^*(t)) \geq - L(\mu^*(t),u^*(t)),
\end{equation}
for $\Lcal^1$-almost every $t \in [0,T]$. Following \cite[Section 3.2]{SetValuedPMP}, it can be shown that for $\Lcal^1$-almost every $t \in [0,T]$
\begin{equation*}
\Phi^*_{(t,t+\epsilon)}(x) = x + \epsilon v(t,\mu^*(t),u^*(t),x) + o_{t,x}(\epsilon),
\end{equation*}
for every $\epsilon \in \R$, with $\INTSeg{\sup_x|o_{t,x}(\epsilon)|}{t}{0}{T} = o(\epsilon)$. This combined with \eqref{eq:Semigroup} and  Proposition \ref{prop:LipReg} then yields
\begin{equation}
\label{eq:ValueEst1}
\begin{aligned}
& \Vcal(t+\epsilon,\mu^*(t+\epsilon)) \\
& = \Vcal(t+\epsilon,(\Id+\epsilon v(t,\mu^*(t),u^*(t)))_{\#} \mu^*(t)) + o_t(\epsilon)
\end{aligned}
\end{equation}
with $\INTSeg{|o_t(\epsilon)|}{t}{0}{T} = o(\epsilon)$. By the sensitivity relation of Theorem \ref{thm:Sensitivity}, it further holds
\begin{equation}
\label{eq:ValueEst2}
\begin{aligned}
& \limsup_{\epsilon \rightarrow 0^+} \Big[ \tfrac{1}{\epsilon} \Big( \Vcal(t+\epsilon,(\Id + \epsilon v(t,\mu^*(t),u^*(t)))_{\#} \mu^*(t)) \\
& \hspace{5.8cm} - \Vcal(t,\mu^*(t)) \Big) \Big] \\
& \leq - \langle \bar{\nu}^*(t) , v(t,\mu^*(t),u^*(t)) \rangle_{L^2(\mu^*(t))} \hspace{-0.05cm} + \hspace{-0.05cm} \H(t,\nu^*(t),u^*(t)) \\
& = -L(\mu^*(t),u^*(t)),
\end{aligned}
\end{equation}
for $\Lcal^1$-almost every $t \in [0,T]$, where we used the definition \eqref{eq:HamiltonianDef} of the Hamiltonian. Upon combining \eqref{eq:ValueEst1} and \eqref{eq:ValueEst2}, we can thus recover
\begin{equation*}
\tderv{}{t} \Vcal(t,\mu^*(t)) \leq -L(\mu^*(t),u^*(t)),
\end{equation*}
for $\Lcal^1$-almost every $t \in [0,T]$, which together with \eqref{eq:ValueEst0} concludes the proof of our claim up to an integration over any time interval of the form $[\tau_1,\tau_2]$.
\end{proof}


\subsection{Regularity of the value function}

Our aim now is to study semiconcavity estimates for $\Vcal(\cdot,\cdot)$, taking inspiration from \cite[Chapter 9]{AGS}. We start our developments by recalling in which sense this notion is understood in Wasserstein spaces.

\begin{Def}[Interpolation semiconcavity]
A map $\phi : \Pcal_2(\R^d) \rightarrow \R \cup \{ \pm \infty \}$ is \textit{locally semiconcave} if it is finite on $\Pcal_c(\R^d) $,  and for every $r > 0$ there exists $\Cpazo_r > 0$ such that for any $\lambda \in [0,1]$,
\begin{equation*}
\begin{aligned}
(1-\lambda) \phi(\mu_1) + \lambda \phi(\mu_2) - & \phi \big( \Bmu^{1 \rightarrow 2}_{\lambda} \big) \\
& \leq \Cpazo_r \lambda(1-\lambda) W_{2,\Bmu}^2(\mu_1,\mu_2),
\end{aligned}
\end{equation*}
for any $\mu_1,\mu_2 \in \Pcal(B(0,r))$ and each $\Bmu \in \Gamma(\mu,\nu)$, where $\Bmu^{1\rightarrow2}_{\lambda} := \big((1-\lambda)\pi^1 + \lambda \pi^2 \big)_{\#} \Bmu$,  and \vspace{-0.1cm}
\begin{equation*}
W_{2,\Bmu}(\mu,\nu) := \Big( \INTDom{|x-y|^2}{\R^{2d}}{\Bmu(x,y)} \Big)^{1/2}.
\end{equation*}
\end{Def}

\vspace{0.1cm}

Below, we make the following additional assumptions on the data of $(\Ppazo)$.

\begin{taggedhyp}{\textbn{(R)}}
\label{hyp:R}
\begin{enumerate}
\item[$(i)$] For every $r > 0$, there exists a constant $\Cpazo_r^v > 0$ such that for all $\lambda \in [0,1]$, it holds
\begin{equation*}
\begin{aligned}
\big|(1-\lambda) v(t,\mu_1,u,x) & + \lambda v(t,\mu_2,u,y) \\
& - v(t,\Bmu^{1\rightarrow2}_{\lambda},u,(1-\lambda)x+\lambda y)| \\
& \hspace{-1.6cm} \leq \Cpazo_r^v \lambda(1-\lambda) \Big( W_{2,\Bmu}^2(\mu_1,\mu_2) + |x-y|^2\Big),
\end{aligned}
\end{equation*}
for all $(t,u) \in [0,T] \times U$, any $\mu_1,\mu_2 \in \Pcal(B(0,r))$ and each $x,y \in B(0,r)$.
\item[$(ii)$] The maps $\mu \in \Pcal_c(\R^d) \mapsto L(\mu,u),\varphi(\mu)$ are locally semiconcave, uniformly with respect to $u \in U$.
\end{enumerate}
\end{taggedhyp}

\begin{thm}[Semiconcavity]
\label{thm:Semiconcavity}
Suppose that hypotheses \ref{hyp:OCP} and \ref{hyp:R} hold. Then for every $r >0$, there exists $\Ccal_r > 0$ such that for any $\lambda \in [0,1]$, it holds
\begin{equation*}
\begin{aligned}
(1-\lambda) \Vcal(\tau_1,\mu_1) & + \lambda \Vcal(\tau_2,\mu_2) \\
& - \Vcal \big( (1-\lambda) \tau_1 + \lambda \tau_2 , \Bmu^{1 \rightarrow 2}_{\lambda} \big) \\
& \hspace{-0.4cm} \leq \Ccal_r \lambda(1-\lambda) \Big( |\tau_1-\tau_2|^2 + W_{2,\Bmu}^2(\mu_1,\mu_2) \Big),
\end{aligned}
\end{equation*}
for every $\tau_1,\tau_2 \in [0,T]$, any $\mu_1,\mu_2 \in \Pcal(B(0,r))$ and each $\Bmu \in \Gamma(\mu_1,\mu_2)$.
\end{thm}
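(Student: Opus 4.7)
The plan is to test the value at the midpoint $(\tau_\lambda, \mu_\lambda) := ((1-\lambda)\tau_1 + \lambda\tau_2,\, \Bmu_\lambda^{1\rightarrow 2})$ with an optimal pair $(\mu^*(\cdot), u^*(\cdot))$ produced by Theorem~\ref{thm:Existence}, and then to transport it into competitor pairs for the two endpoint problems through an affine time rescaling coupled with a joint flow in the Wasserstein space. Without loss of generality assume $\tau_1 \leq \tau_2$, and introduce the linear time maps $\phi_i : [\tau_i, T] \to [\tau_\lambda, T]$ defined by $\phi_i(t) := \tau_\lambda + \alpha_i (t - \tau_i)$ with $\alpha_i := (T - \tau_\lambda)/(T - \tau_i)$, together with the admissible rescaled controls $\tilde u_i(t) := u^*(\phi_i(t)) \in \U$. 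The algebraic identity $(1-\lambda)\alpha_1^{-1} + \lambda \alpha_2^{-1} = 1$, which is immediate from the definition of $\tau_\lambda$, is what will ultimately deliver the quadratic-in-time cancellation.

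Next I would set up a coupled flow: given the fixed plan $\Bmu \in \Gamma(\mu_1,\mu_2)$, let $\Bnu(s) \in \Pcal_c(\R^{2d})$ be the joint evolution on $[\tau_\lambda, T]$ whose first and second marginals are $\nu_i(s) := \tilde\mu_i(\phi_i^{-1}(s))$, where $\tilde\mu_i(\cdot)$ is the trajectory from $\mu_i$ at time $\tau_i$ under $\tilde u_i$. By the time-change rule, $\nu_i$ solves $\partial_s \nu_i + \Div_x \bigl( \alpha_i^{-1} v(\phi_i^{-1}(s), \nu_i, u^*(s)) \nu_i \bigr) = 0$ on $[\tau_\lambda, T]$. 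Setting $\nu_\lambda(s) := \bigl( \Bnu(s) \bigr)_\lambda^{1\rightarrow 2}$, the analytical heart of the proof is a Gronwall estimate yielding
\begin{equation*}
W_1(\nu_\lambda(s), \mu^*(s)) + W_{2, \Bnu(s)}^2(\nu_1(s), \nu_2(s)) \leq \mathcal{K}_r \, \lambda(1-\lambda) \bigl( |\tau_1 - \tau_2|^2 + W_{2, \Bmu}^2(\mu_1, \mu_2) \bigr),
\end{equation*}
for some constant $\mathcal{K}_r > 0$ depending only on $r$ and the data of $(\Ppazo)$. The spatial quadratic contribution is obtained by invoking hypothesis~\ref{hyp:R}-(i), whose joint semiconcavity of $v$ in $(\mu, x)$ is essential to bypass the naive linear Gronwall bound; the temporal contribution is generated by the perturbations $\alpha_i^{-1} - 1 = O(|\tau_i - \tau_\lambda|)$ and $|\phi_i^{-1}(s) - s| = O(|\tau_i - \tau_\lambda|)$, whose $\lambda$-weighted combinations vanish at first order by the definition of $\tau_\lambda$.

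To close the argument I would plug $(\tilde\mu_i, \tilde u_i)$ into the definition of $\Vcal(\tau_i, \mu_i)$, perform the change of variables $s = \phi_i(t)$ in the running cost (which produces the factor $\alpha_i^{-1}$), and take the convex combination over $i = 1,2$. Writing
\[
(1-\lambda) L(\nu_1(s), u^*(s)) + \lambda L(\nu_2(s), u^*(s)) - L(\mu^*(s), u^*(s))
\]
as an interpolation-semiconcavity term controlled by hypothesis~\ref{hyp:R}-(ii) against the coupling $\Bnu(s)$, plus a Lipschitz-gap term $L(\nu_\lambda(s), u^*(s)) - L(\mu^*(s), u^*(s))$ controlled by $W_1(\nu_\lambda(s), \mu^*(s))$ through Proposition~\ref{prop:LipReg}, and treating the terminal cost $\varphi$ at $s = T$ analogously, one would recover the claimed semiconcavity with a suitable $\Ccal_r > 0$. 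The principal obstacle is the Gronwall step above: achieving a quadratic rather than linear rate in $\lambda(1-\lambda)$ requires tracking the full coupled plan $\Bnu(s)$ rather than only its marginals, applying hypothesis~\ref{hyp:R}-(i) pointwise under this coupling, and arranging the bookkeeping so that the time-rescaling perturbations enter the error estimate only through their first-order-vanishing $\lambda$-weighted averages.
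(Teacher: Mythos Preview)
Your strategy coincides with the paper's sketch: first propagate a norm-semiconcavity estimate to the non-local flows via a coupled (two-particle) dynamics and hypothesis~\ref{hyp:R}-$(i)$, then absorb the running and terminal costs through~\ref{hyp:R}-$(ii)$ and the $W_1$-Lipschitz bounds of~\ref{hyp:OCP}, and handle the time variable by the affine rescaling \`a la Cannarsa--Frankowska. The decomposition you describe in the final paragraph is exactly the right one.

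There is, however, a slip in your displayed Gronwall inequality. The quantity $W_{2,\Bnu(s)}^2(\nu_1(s),\nu_2(s))$ cannot be bounded by $\mathcal{K}_r\,\lambda(1-\lambda)\bigl(|\tau_1-\tau_2|^2+W_{2,\Bmu}^2(\mu_1,\mu_2)\bigr)$: take $\lambda=0$ with $\mu_1\neq\mu_2$ and the right-hand side vanishes while the left does not. What one actually obtains is two separate estimates,
\[
W_{2,\Bnu(s)}^2(\nu_1(s),\nu_2(s)) \leq \mathcal{K}_r \bigl(|\tau_1-\tau_2|^2+W_{2,\Bmu}^2(\mu_1,\mu_2)\bigr),
\qquad
W_1(\nu_\lambda(s),\mu^*(s)) \leq \mathcal{K}_r\,\lambda(1-\lambda)\bigl(|\tau_1-\tau_2|^2+W_{2,\Bmu}^2(\mu_1,\mu_2)\bigr).
\]
The first is plain Lipschitz-Gronwall along the coupled flow; the $\lambda(1-\lambda)$ factor appears only in the second, because $\nu_\lambda$ and $\mu^*$ share the same initial datum $\Bmu_\lambda^{1\rightarrow2}$ and the discrepancy of their driving fields is precisely the defect in~\ref{hyp:R}-$(i)$ plus the $\lambda$-weighted time-rescaling errors, both of which carry $\lambda(1-\lambda)$. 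With this correction your closing argument goes through unchanged: the interpolation-semiconcavity term from~\ref{hyp:R}-$(ii)$ already contains $\lambda(1-\lambda)$ and is multiplied by the first bound, while the Lipschitz-gap term is controlled by the second bound.
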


\begin{proof}
The proof of this result is again fairly long and technical, and will be exposed in greater details in a subsequent publication. The core idea is to use the refined superposition results from \cite[Lemma 1]{ContInc} to prove that a norm-semiconcavity estimate similar to \ref{hyp:R}-$(i)$ holds for the non-local flows $(\Phi^{u}_{(\tau,t)}[\cdot](\cdot))_{\tau,t \in [0,T]}$. It is then possible to recover the local semiconcavity of $\Vcal(\tau,\cdot)$ for any fixed $\tau \in [0,T]$ by combining the latter estimates with hypotheses \ref{hyp:OCP}-$(ii)$,$(iii)$ and \ref{hyp:R}-$(ii)$.

To recover the semiconcavity with respect to both variables, one must perform a suitable rescaling of the time variable in the spirit of \cite[Section 5]{Cannarsa1991}, combined with the semiconcavity estimates already established with respect to the measure variable.
\end{proof}


\section{Optimal feedbacks}
\label{section:Reg}

By leveraging several results of section \ref{section:Sensibility}, it is possible to derive a general characterisation of optimal feedbacks for mean-field optimal control problems. Throughout this section, we assume that hypotheses \ref{hyp:OCP} hold.

We start by recalling the definition of an \textit{upper-semicontinuous} set-valued map (``u.s.c.'' for short).

\begin{Def}[Regular set-valued maps]
\label{def:USC}
A multifunction $\G : \Scal \rightrightarrows X$ from a metric space $(\Scal,d_{\Scal})$ into a Banach space $(X,\Norm{\cdot}_X)$ is upper-semicontinuous if for any $s \in \Scal$ and every neighbourhood $\Omega$ of $\G(s)$, there exists $\eta > 0$ such that $\G(s') \subset \Omega$ for every $s' \in B(s,\eta)$. In particular, if $\G(\cdot)$ is u.s.c. and single-valued, it is then continuous in the usual sense.
\end{Def}

In order to characterise optimal feedbacks for $(\Ppazo)$, we need to introduce two notions of inferior derivatives for functionals defined over the space of measures.

\begin{Def}[Two notions of lower-derivatives]
Let $\phi : \Pcal_2(\R^d) \rightarrow \R \cup \{ \pm \infty\}$ and $\mu \in \Pcal_c(\R^d)$,  $\phi(\mu) \neq \{ \pm \infty\}$. Given $\F \in C^0(\R^d,\R^d)$, we define the \textit{lower Dini derivative} of $\phi(\cdot)$ at $\mu$ in the direction $\F$ as
\begin{equation*}
\partial^-_{\D} \phi(\mu)(\F) := \liminf_{\epsilon \rightarrow 0^+} \Big[ \frac{\phi((\Id + \epsilon \F)_{\#} \mu) - \phi(\mu)}{\epsilon} \Big].
\end{equation*}
Given a parameter $R > 0$, we also define the \textit{regularised $R$-lower derivative} of $\phi(\cdot)$ at $\mu$ in the direction $\F$ as
\begin{equation*}
\partial^-_{o,R} \phi(\mu)(\F) := \liminf_{\substack{\nu \rightarrow \mu, \, \epsilon \rightarrow 0^+ \\ \nu \in \Pcal(B_R(\mu))}} \Big[ \frac{\phi((\Id + \epsilon \F)_{\#} \nu) - \phi(\nu)}{\epsilon} \Big].
\end{equation*}
\end{Def}

\begin{prop}[The case of semiconcave functions]
\label{prop:LowerDer}
Suppose that $\phi : \Pcal_2(\R^d) \rightarrow \R \cup \{ \pm \infty\}$ is locally semiconcave and let $\mu \in \Pcal_c(\R^d)$. Then, it holds that
\begin{equation*}
\partial_{\D}^- \phi(\mu)(\F) = \partial_{o,R}^- \phi(\mu)(\F),
\end{equation*}
for every $R>0$ and all $\F \in C^0(\R^d,\R^d)$.
\end{prop}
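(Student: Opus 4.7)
The easy direction is immediate: restricting the regularised $\liminf$ to the constant sequence $\nu \equiv \mu$ at once gives $\partial^-_{o,R} \phi(\mu)(\F) \leq \partial^-_D \phi(\mu)(\F)$. The real work lies in the reverse inequality, and the plan is to use the local semiconcavity of $\phi$ to establish a one-sided monotonicity of the difference quotients $D_\nu(\epsilon) := \tfrac{1}{\epsilon}[\phi((\Id + \epsilon \F)_{\#} \nu) - \phi(\nu)]$ in $\epsilon$, uniformly in $\nu \in \Pcal(B_R(\mu))$, and then to pass to the limit using continuity of $\phi$.

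For the monotonicity step, I would choose $R' > R$ large enough that $(\Id + \epsilon \F)_\# \nu$ remains in $\Pcal(B(0,R'))$ for every $\nu \in \Pcal(B_R(\mu))$ and every small $\epsilon$, and set $M := \sup_{x \in B_{R'}(\mu)} |\F(x)|$ together with $C := \Cpazo_{R'}$. For $0 < t < \delta$ small, apply the semiconcavity inequality to $\mu_1 = \nu$, $\mu_2 = (\Id + \delta \F)_\# \nu$, with the natural transport plan $\Bmu = (\Id, \Id + \delta \F)_\# \nu$ and interpolation parameter $\lambda = t/\delta \in (0,1)$. A direct computation gives $\Bmu^{1 \rightarrow 2}_{t/\delta} = (\Id + t \F)_\# \nu$ together with $W_{2,\Bmu}^2(\mu_1,\mu_2) = \delta^2 \INTDom{|\F(x)|^2}{\R^d}{\nu(x)} \leq \delta^2 M^2$, so rearranging and dividing by $t$ yields
\begin{equation*}
D_\nu(\delta) \leq D_\nu(t) + C M^2 (\delta - t), \qquad 0 < t < \delta.
\end{equation*}

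To conclude, pick sequences $\nu_n \to \mu$ in $W_2$ with $\nu_n \in \Pcal(B_R(\mu))$ and $\epsilon_n \to 0^+$ realising $\partial^-_{o,R} \phi(\mu)(\F)$. For any fixed small $\delta > 0$ and $n$ large enough so that $\epsilon_n < \delta$, the above estimate gives $D_{\nu_n}(\epsilon_n) \geq D_{\nu_n}(\delta) - C M^2 \delta$. The convergence $\nu_n \to \mu$ propagates to $(\Id + \delta \F)_\# \nu_n \to (\Id + \delta \F)_\# \mu$ in $W_2$, and provided $\phi$ is $W_2$-continuous on $\Pcal_c(\R^d)$ one obtains $D_{\nu_n}(\delta) \to D_\mu(\delta)$. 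Passing to the liminf then produces $\partial^-_{o,R}\phi(\mu)(\F) \geq D_\mu(\delta) - CM^2 \delta$; letting $\delta \to 0^+$ finishes the argument, since by definition $\liminf_{\delta \to 0^+} D_\mu(\delta) = \partial^-_D \phi(\mu)(\F)$.

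The main technical obstacle I anticipate is precisely the $W_2$-continuity of $\phi$ needed to justify $D_{\nu_n}(\delta) \to D_\mu(\delta)$: the one-sided semiconcavity inequality alone does not automatically entail continuity, and a discontinuous linear functional in $\mu$ can satisfy weak versions of such estimates. However, combining the semiconcavity bound with the hypothesis that $\phi$ is finite on $\Pcal_c(\R^d)$, and applying it to interpolations through an auxiliary fixed reference measure to control $\phi(\nu_n)$ both from above and from below, one should recover local Lipschitz continuity of $\phi$ in $W_2$ in the spirit of \cite[Chapter 10]{AGS}, which is exactly what the remainder of the argument requires.
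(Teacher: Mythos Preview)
Your argument is correct and is precisely the adaptation of \cite[Section 3]{Cannarsa1991} that the paper invokes as its proof: the semiconcavity inequality applied along the segment $\lambda \mapsto (\Id + \lambda \F)_{\#}\nu$ yields the one-sided monotonicity $D_\nu(\delta) \leq D_\nu(t) + C M^2(\delta - t)$ uniformly in $\nu$, after which continuity of $\phi$ lets one freeze $\delta$, pass to the limit in $\nu_n$, and then send $\delta \to 0^+$. Your flagged technical point---that local semiconcavity together with finiteness on $\Pcal_c(\R^d)$ forces local $W_2$-Lipschitz continuity via the standard two-sided interpolation trick---is exactly the missing ingredient, and the sketch you give for it is the right one.
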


\begin{proof}
Adapt the arguments of \cite[Section 3]{Cannarsa1991}.
\end{proof}

In the following theorem, we combine these notions of non-smooth analysis to provide a general characterisation of optimal feedbacks for $(\Ppazo)$.

\begin{thm}[Optimal feedbacks]
\label{thm:Feedback}
An admissible pair $(\mu^*(\cdot),u^*(\cdot))$ is optimal for $(\Ppazo)$ if and only if
\begin{equation}
\label{eq:Feedback}
\begin{aligned}
v(t,\mu^*&(t),u^*(t)) \in \G(t,\mu^*(t)),
\end{aligned}
\end{equation}
for $\Lcal^1$-almost every $t \in [0,T]$, where we defined the sets
\begin{equation}
\label{eq:Gdef}
\begin{aligned}
\G(t,\mu) := \Big\{ v(t,\mu,u) ~\text{s.t.}~ \partial_{\D}^- \Vcal(& t,\mu) \big(1,v(t,\mu,u) \big) \\ 
& \leq \hspace{-0.1cm} -L(\mu,u) ~ \text{with $u \in U$}  \Big\},
\end{aligned}
\end{equation}
for all $(t,\mu) \in [0,T] \times \Pcal_c(\R^d)$. If in addition hypotheses \ref{hyp:R} hold and the velocities satisfy the stronger estimate
\begin{equation}
\label{eq:SublinNew}
|v(t,\mu,u,x)| \leq m \Big( 1+|x| + \M_1(\mu) \Big)(1+|u|), 
\end{equation}
then the restricted set-valued maps $\G_{|K} : [0,T] \times \Pcal(K) \rightrightarrows C^0(K,\R^d)$ are upper-semicontinuous for every compact set $K \subset \R^d$.
\end{thm}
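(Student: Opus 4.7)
The plan is to first establish the characterisation \eqref{eq:Feedback} via a dynamic-programming-type argument, and then to derive the upper-semicontinuity of $\G_{|K}$ by passing to the limit in the defining Dini inequality. For the ``only if'' direction of the characterisation, I would exploit the DPP identity
$$ \Vcal(t+\epsilon,\mu^*(t+\epsilon)) - \Vcal(t,\mu^*(t)) = -\INTSeg{L(\mu^*(s),u^*(s))}{s}{t}{t+\epsilon}, $$
combined with the linearisation \eqref{eq:ValueEst1} already used in the sufficient-optimality proof, to conclude that at every Lebesgue point $t$ of $s \mapsto L(\mu^*(s),u^*(s))$ the lower Dini derivative of $\Vcal$ along the direction $(1,v(t,\mu^*(t),u^*(t)))$ is bounded above by $-L(\mu^*(t),u^*(t))$, which is precisely the membership \eqref{eq:Feedback}. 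For the ``if'' direction, since $t \mapsto \Vcal(t,\mu^*(t))$ is absolutely continuous by Proposition \ref{prop:LipReg}, the Dini inequality implies $\tfrac{d}{dt}\Vcal(t,\mu^*(t)) \leq -L(\mu^*(t),u^*(t))$ for $\Lcal^1$-a.e. $t \in [0,T]$; integrating from $0$ to $T$ and using the terminal condition $\Vcal(T,\cdot) = \varphi(\cdot)$ reverses the obvious inequality $\Vcal(0,\mu^*(0)) \leq \INTSeg{L(\mu^*,u^*)}{s}{0}{T} + \varphi(\mu^*(T))$, whence optimality.

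For the upper-semicontinuity statement I would verify the sequential characterisation: given $(t_n,\mu_n) \to (t_0,\mu_0)$ in $[0,T] \times \Pcal(K)$ and $w_n = v(t_n,\mu_n,u_n,\cdot)_{|K} \in \G_{|K}(t_n,\mu_n)$ with $u_n \in U$ satisfying
$$ \partial_{\D}^- \Vcal(t_n,\mu_n)\bigl(1, v(t_n,\mu_n,u_n)\bigr) \leq -L(\mu_n,u_n), \qquad (\ast) $$
a subsequence of $(w_n)$ should converge in $C^0(K,\R^d)$ to an element of $\G_{|K}(t_0,\mu_0)$. The first step extracts a bounded subsequence of $(u_n)$: the Lipschitz estimate of Proposition \ref{prop:LipReg} combined with $W_1((\Id + \epsilon v_n)_\# \mu_n, \mu_n) \leq \epsilon \|v_n\|_{L^1(\mu_n)}$ and the \emph{strengthened} sublinearity \eqref{eq:SublinNew} bounds the left-hand side of $(\ast)$ from below by $-C(1+|u_n|)$, whereas the coercivity $L(\mu_n,u_n) \geq \Cpazo^1 |u_n|^2$ from \ref{hyp:OCP}-$(ii)$ bounds the right-hand side from above by $-\Cpazo^1 |u_n|^2$; comparing the two forces $|u_n|$ to stay bounded. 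Passing to a subsequence, $u_n \to u_0 \in U$, and \ref{hyp:OCP}-$(i)$ (continuity in $u$ plus equi-Lipschitzness in $(t,\mu,x)$ at bounded $u$) yields, via a pointwise-plus-Arzel\`a-Ascoli argument, $w_n \to w_0 := v(t_0,\mu_0,u_0,\cdot)_{|K}$ uniformly on $K$.

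The principal obstacle is then the passage to the limit in $(\ast)$, because lower Dini derivatives are not upper-semicontinuous jointly in $(t,\mu,\F)$ in general. Here I would leverage the joint local semiconcavity of $\Vcal$ granted by Theorem \ref{thm:Semiconcavity} (which activates the additional hypotheses \ref{hyp:R}) together with Proposition \ref{prop:LowerDer}, yielding the identification
$$ \partial_{\D}^- \Vcal(t_0,\mu_0)(1,v_0) = \partial_{o,R}^- \Vcal(t_0,\mu_0)(1,v_0) $$
for $R$ large enough, with $v_0 := v(t_0,\mu_0,u_0)$. For each $n$, I would select $\epsilon_n \to 0^+$ realising the Dini quotient in $(\ast)$ up to a $1/n$ error, then replace $v_n := v(t_n,\mu_n,u_n)$ by $v_0$ inside the pushforward at a cost $\Lpazo_r \|v_n-v_0\|_{C^0(K)} \to 0$ granted by Proposition \ref{prop:LipReg} and the previous step. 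Taking $\liminf_n$ of the resulting inequality and invoking the lower semicontinuity $\liminf_n L(\mu_n,u_n) \geq L(\mu_0,u_0)$ (obtained by combining the $u$-lsc and $\mu$-Lipschitzness in \ref{hyp:OCP}-$(ii)$), we conclude that $\partial_{o,R}^- \Vcal(t_0,\mu_0)(1,v_0) \leq -L(\mu_0,u_0)$; the above identification upgrades this to the Dini form, so $w_0 \in \G_{|K}(t_0,\mu_0)$ as desired.
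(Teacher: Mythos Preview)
Your proposal is correct and follows essentially the same route as the paper's proof: the characterisation via the dynamic programming principle together with the linearisation \eqref{eq:ValueEst1}, and the upper-semicontinuity by combining the semiconcavity of $\Vcal$ (Theorem \ref{thm:Semiconcavity} and Proposition \ref{prop:LowerDer}) to stabilise the Dini inequality under limits, with the compactness of images granted by \eqref{eq:SublinNew} and the coercivity of $L$. Your sequential argument is simply the explicit unpacking of the paper's closed-graph-plus-compact-images reasoning, and your coercivity-versus-sublinearity comparison to bound $(u_n)$ is precisely the mechanism the paper leaves implicit when asserting that the images of $\G_{|K}$ lie in a compact subset of $C^0(K,\R^d)$.
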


\begin{proof}
The proof of the first part of this result -- namely that $(\mu^*(\cdot),u^*(\cdot))$ is optimal if and only if \eqref{eq:Feedback} holds $\Lcal^1$-almost everywhere in $[0,T]$ -- is quite long, and can be obtained by performing computations similar to those in the proof of Theorem \ref{thm:Sensitivity} above.

The regularity of the set-valued map $\G(\cdot,\cdot)$ under hypotheses \ref{hyp:R} can be established via the following arguments. By Theorem \ref{thm:Semiconcavity}, the value function $\Vcal(\cdot,\cdot)$ is locally semiconcave. Thus by Proposition \ref{prop:LowerDer}, the lower-derivative $\partial_{\D}^- \Vcal(t,\mu)(1,v(t,\mu,u))$ coincides with the regularised $R$-lower derivative $\partial^-_ {o,R}\Vcal(t,\mu)(1,v(t,\mu,u))$ for any $R>0$ and all $(t,\mu,u) \in [0,T] \times \Pcal_c(\R^d) \times U$. By definition of the latter together with the regularity assumptions \ref{hyp:OCP}-$(i),(ii)$ on the controlled velocity field and the running cost, it can be shown that for every $K \subset \R^d$, the graph of the restricted set-valued map $\G_{|K}(\cdot,\cdot)$ is closed. Moreover as a consequence of hypotheses \ref{hyp:OCP} and  \eqref{eq:SublinNew}, it can be checked that the images of $\G_{|K}(\cdot,\cdot)$ are contained in a compact subset of $C^0(K,\R^d)$. This concludes the proof since, by classical results in set-valued analysis (see e.g. \cite[Section 1.4]{Aubin1990}), every such multifunction with closed graph is upper-semicontinuous.
\end{proof}

In conclusion, the results of Theorem \ref{thm:Feedback} inform us that every optimal curve $\mu^*(\cdot)$ to a problem of the form $(\Ppazo)$ can be written as the solution of a \textit{continuity inclusion} defined in the sense of \cite{ContInc}.

\begin{cor}[Continuity inclusion]
A curve of measures $\mu^*(\cdot)$ is optimal for $(\Ppazo)$ if and only if it solves
\begin{equation}
\label{eq:ContinuityInc}
\partial_t \mu^*(t) \in - \Div_x(\G(t,\mu^*(t)\mu^*(t)),
\end{equation}
with $\G(\cdot,\cdot)$ as in \eqref{eq:Gdef}, namely if and only if there exists an $\Lcal^1$-measurable selection $t \in [0,T] \mapsto \vb^*(t) \in \G(t,\mu^*(t))$ such that the pair $(\mu^*(\cdot),\vb^*(\cdot))$ satisfies
\begin{equation*}
\partial_t \mu^*(t) + \Div_x(\vb^*(t)\mu^*(t)) = 0
\end{equation*}
in the sense of distributions.
\end{cor}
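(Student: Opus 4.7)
The plan is to deduce this corollary directly from Theorem \ref{thm:Feedback} by translating between the pointwise feedback condition on admissible controls and the distributional continuity inclusion on velocity fields.

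For the \emph{only if} direction, starting from an optimal pair $(\mu^*(\cdot),u^*(\cdot))$, I would set $\vb^*(t) := v(t,\mu^*(t),u^*(t))$. Theorem \ref{thm:Feedback} immediately yields $\vb^*(t) \in \G(t,\mu^*(t))$ for $\Lcal^1$-almost every $t \in [0,T]$, and by construction $\mu^*(\cdot)$ solves the continuity equation driven by $\vb^*(\cdot)$. The $\Lcal^1$-measurability of $\vb^*(\cdot)$ as a map into $C^0(K,\R^d)$ for a compact set $K$ containing $\bigcup_t \supp(\mu^*(t))$ follows by composition from the regularity of $v$ ensured by \ref{hyp:OCP}-$(i)$, the continuity of $t \mapsto \mu^*(t)$, and the measurability of $u^*(\cdot)$.

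For the converse, given an $\Lcal^1$-measurable selection $t \mapsto \vb^*(t) \in \G(t,\mu^*(t))$, the task is to lift it to an admissible control $u^*(\cdot) \in \U$ realising it. To this end I would introduce the multifunction
\begin{equation*}
\Ucal(t) := \Big\{ u \in U \, : \, v(t,\mu^*(t),u) = \vb^*(t), \ \partial_{\D}^- \Vcal(t,\mu^*(t))(1,\vb^*(t)) \leq -L(\mu^*(t),u) \Big\},
\end{equation*}
which is nonempty for $\Lcal^1$-almost every $t$ by the definition \eqref{eq:Gdef} of $\G$. Using \ref{hyp:OCP}-$(i),(ii)$ (continuity of $u \mapsto v(t,\mu,u,x)$ and lower semicontinuity of $u \mapsto L(\mu,u)$), the values $\Ucal(t)$ are closed and the graph of $\Ucal(\cdot)$ is $\Lcal^1 \otimes \Bcal(U)$-measurable. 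A standard measurable selection theorem (e.g. Kuratowski--Ryll-Nardzewski) then supplies an $\Lcal^1$-measurable selection $u^*(\cdot)$. The quadratic coercivity $L(\mu,u) \geq \Cpazo^1 |u|^2$ from \ref{hyp:OCP}-$(ii)$, combined with the finiteness of the total running cost -- which follows by integrating the feedback inequality against the absolutely continuous map $t \mapsto \Vcal(t,\mu^*(t))$ provided by Proposition \ref{prop:LipReg} -- ensures that $u^*(\cdot) \in L^2([0,T],U) = \U$. Theorem \ref{thm:Feedback} then yields optimality of $(\mu^*(\cdot),u^*(\cdot))$.

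The main technical obstacle will be the measurable selection step, specifically verifying that the constraint involving $\partial_{\D}^- \Vcal(t,\mu^*(t))(1,v(t,\mu^*(t),u))$ is jointly measurable in $(t,u)$. This will require rewriting the liminf as a countable combination of Lipschitz quantities, leveraging the joint regularity of $\Vcal(\cdot,\cdot)$ from Proposition \ref{prop:LipReg} and the continuity of $u \mapsto v(t,\mu,u,x)$. Once this measurability is in hand, the remaining steps reduce to classical arguments from set-valued analysis, as can be found e.g. in \cite{Aubin1990}.
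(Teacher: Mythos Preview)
The paper states this corollary without proof, treating it as an immediate reformulation of Theorem \ref{thm:Feedback} in the language of continuity inclusions from \cite{ContInc}. Your argument is the natural way to make this precise and is correct in outline: the forward direction is indeed immediate from Theorem \ref{thm:Feedback}, while the converse genuinely requires the measurable-selection step you describe to lift $\vb^*(\cdot)$ back to an admissible control $u^*(\cdot) \in \U$. Your choice to include the feedback inequality in the definition of $\Ucal(t)$ --- rather than performing a bare Filippov-type selection on the constraint $v(t,\mu^*(t),u) = \vb^*(t)$ --- is exactly what is needed, since it is this inequality that, once integrated against the absolutely continuous map $t \mapsto \Vcal(t,\mu^*(t))$, yields $\int_0^T L(\mu^*(t),u^*(t))\,\textnormal{d}t \leq \Vcal(0,\mu^0) - \varphi(\mu^*(T)) < +\infty$ and hence the $L^2$-bound on $u^*(\cdot)$ via the coercivity $L \geq \Cpazo^1|u|^2$. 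The joint-measurability obstacle you flag for the Dini-derivative constraint is real but tractable along the lines you indicate, so your proposal is a legitimate completion of what the paper leaves implicit.
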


Even though the theory developed in \cite{ContInc} only accounts at the present moment for set-valued velocities that are Lipschitz with respect to the measure variable, the existence of solutions to \eqref{eq:ContinuityInc} is guaranteed whenever there exists at least one optimal solution to $(\Ppazo)$. In this context if we further assume that hypotheses \ref{hyp:R} hold, then for every compact set $K \subset \R^d$ the restricted set-valued maps $\G_{|K}(\cdot,\cdot)$ are upper-semicontinuous. The importance of this regularity result comes from the fact that set-valued maps with closed graphs are more convenient when  Euler constructions of solutions are involved. Also as previously mentioned, the applications $\G_{|K}(\cdot,\cdot)$ are continuous as soon as they are single-valued, e.g. when the value function is differentiable.


\bibliographystyle{plain}
{\footnotesize 
\bibliography{../../../ControlWassersteinBib}

\begin{thebibliography}{10}

\bibitem{AmbrosioFuscoPallara}
L.~Ambrosio, N.~Fusco, and D.~Pallara.
\newblock {\em {F}unctions of {B}ounded {V}ariations and {F}ree {D}iscontinuity
  {P}roblems}.
\newblock Oxford Mathematical Monographs, 2000.

\bibitem{AGS}
L.~Ambrosio, N.~Gigli, and G.~Savar{\'e}.
\newblock {\em {G}radient {F}lows in {M}etric {S}paces and in the {S}pace of
  {P}robability {M}easures}.
\newblock Lectures in Mathematics ETH Z\"urich. Birkh\"auser Verlag, 2008.

\bibitem{Aubin1990}
J.-P. Aubin and H.~Frankowska.
\newblock {\em Set-Valued Analysis}.
\newblock Birkh{\"a}user Basel, 1990.

\bibitem{Bensoussan2013}
A.~Bensoussan, J.~Frehse, and P.~Yam.
\newblock {\em Mean Field Games and Mean Field Type Control Theory}.
\newblock Springer Briefs in Mathematics. Springer, 2013.

\bibitem{PMPWassConst}
B.~Bonnet.
\newblock {A Pontryagin Maximum Principle in Wasserstein Spaces for Constrained
  Optimal Control Problems}.
\newblock {\em ESAIM COCV}, 25(52), 2019.

\bibitem{ContInc}
B.~Bonnet and H.~Frankowska.
\newblock {Differential Inclusions in Wasserstein Spaces: The Cauchy-Lipschitz
  Framework}.
\newblock {\em Journal of Differential Equations}, 271:594--637, 2021.

\bibitem{SetValuedPMP}
B.~Bonnet and H.~Frankowska.
\newblock {Necessary Optimality Conditions for Optimal Control Problems in
  Wasserstein Spaces}.
\newblock {\em Applied Mathematics and Optimization, Published Online}, 2021.

\bibitem{PMPWass}
B.~Bonnet and F.~Rossi.
\newblock {The Pontryagin Maximum Principle in the Wasserstein Space}.
\newblock {\em Calculus of Variations and Partial Differential Equations},
  58:11, 2019.

\bibitem{LipReg}
B.~Bonnet and F.~Rossi.
\newblock {Intrinsic Lipschitz Regularity of Mean-Field Optimal Controls}.
\newblock {\em SIAM Journal on Control and Optimization}, 59(3):2011–2046,
  2021.

\bibitem{Cannarsa1991}
P.~Cannarsa and H.~Frankowska.
\newblock {Some Characterizations of Optimal Trajectories in Control Theory}.
\newblock {\em SIAM Journal on Control and Optimization}, 29(6):1322--1347,
  1991.

\bibitem{CannarsaS2004}
P.~Cannarsa and C.~Sinestrari.
\newblock {\em {Semiconcave Functions, Hamilton-Jacobi Equations and Optimal
  Control}}.
\newblock Progresses in Nonlinear Differential Equations and Their
  Applications. Birkha\"auser Basel, 2004.

\bibitem{Capuzzo1984}
I.~Capuzzo~Dolcetta and H.~Ishii.
\newblock {Approximate Solutions of the Bellman Equation of Deterministic
  Control}.
\newblock {\em Applied Mathematics and Optimization}, 11:161--181, 1984.

\bibitem{Cardaliaguet2010}
P.~Cardaliaguet.
\newblock {Notes on Mean-Field Games}.
\newblock 2012.
\newblock https://www.ceremade.dauphine.fr/~cardaliaguet/MFG20130420.pdf.

\bibitem{CDLL}
P~Cardaliaguet, F.~Delarue, J-M. Lasry, and P.-L. Lions.
\newblock {\em {The Master Equation and the Convergence Problem in Mean Field
  Games}}.
\newblock Annals of Mathematics Studies. Princeton Univesrity Press, 2019.

\bibitem{Carmona2018}
R.~Carmona and F.~Delarue.
\newblock {\em {Probabilistic Theory of Mean-Field Games with Applications}}.
\newblock Springer, 2018.

\bibitem{Cavagnari2021}
G.~Cavagnari, S.~Lisini, C.~Orrieri, and G.~Savaré.
\newblock {Lagrangian, Eulerian and Kantorovich Formulations of Multi-Agent
  Optimal Control Problems: Equivalence and Gamma-Convergence}.
\newblock {\em arxiv preprint arXiv:2011.07117}, 2021.

\bibitem{Cavagnari2020}
G.~Cavagnari, A.~Marigonda, and B.~Piccoli.
\newblock {Generalized Dynamic Programming Principle and Sparse Mean-Field
  Control Problems}.
\newblock {\em Journal of Mathematical Analysis and Applications},
  481(1):123437, 2020.

\bibitem{Choi2014}
Y.-P. Choi, J.A. Carrillo, and M.~Hauray.
\newblock {The Derivation of Swarming Models: Mean-Field Limit and Wasserstein
  Distances}.
\newblock In {\em Collective Dynamics from Bacteria to Crowds, CISM
  International Centre for Mechanical Sciences}, volume 553, pages 1--46.
  Springer, 2014.

\bibitem{Flemming1975}
W.H. Flemming and R.W. Rischel.
\newblock {\em {Deterministic and Stochastic Optimal Control}}, volume~1 of
  {\em Stochastic Modelling and Applied Probability}.
\newblock Springer, 1975.

\bibitem{Fornasier2019}
M.~Fornasier, S.~Lisini, C.~Orrieri, and G.~Savar{\'e}.
\newblock {Mean-Field Optimal Control as Gamma-Limit of Finite Agent Controls}.
\newblock {\em European Journal of Applied Mathematics}, 30(6):1153--1186,
  2019.

\bibitem{Huang2006}
M.Y. Huang, R.~Malham\'e, and P.E. Caines.
\newblock {Large Population Stochastic Dynamic Games : Closed-Loop
  McKean-Vlasov Systems and the Nash Certainty Equivalence Principle}.
\newblock {\em Communications in Information and Systems}, 6(3):221--252, 2006.

\bibitem{Lasry2007}
J-M. Lasry and P.-L. Lions.
\newblock {Mean Field Games}.
\newblock {\em Japanese Journal of Mathematics}, 2(1):229--260, 2007.

\bibitem{Egerstedt2010}
M.~Mesbahi and M.~Egerstedt.
\newblock {\em {Graph Theoretic Methods in Multiagent Networks}}.
\newblock 2010.

\bibitem{Otto2001}
F.~Otto.
\newblock {The Geometry of Dissipative Equations : The Porous Medium Equation}.
\newblock {\em Communications in Partial Differential Equations}, 26:101--174,
  2001.

\bibitem{ControlKCS}
B.~Piccoli, F.~Rossi, and E.~Tr{\'e}lat.
\newblock {Control to Flocking of the Kinetic Cucker-Smale model}.
\newblock {\em SIAM Journal on Mathematical Analysis}, 47(6):4685--4719, 2015.

\bibitem{Pontryagin}
L.S. Pontryagin, V.G. Boltyanskii, R.V. Gamkrelidze, and E.F. Mischenko.
\newblock {\em {The Mathematical Theory of Optimal Processes}}, volume~4.
\newblock CRC Press, 1987.

\bibitem{Rifford2002}
L.~Rifford.
\newblock {Semiconcave Control-Lyapunov Functions and Stabilizing Feedbacks}.
\newblock {\em SIAM Journal on Control and Optimization}, 41(3):659--681, 2002.

\end{thebibliography}
}

\end{document}